\def \RR {\mathbb R}
\def \EE {\mathbb E}
\def \eps {\varepsilon}
\def \cF {\mathcal F}
\newtheorem{theorem}{Theorem}[section]
\newtheorem{lemma}[theorem]{Lemma}
\newtheorem{proposition}[theorem]{Proposition}
\newtheorem{corollary}[theorem]{Corollary}
\newtheorem{remark}[theorem]{Remark}
\def\myffrac#1#2 in #3{\raise 2.6pt\hbox{$#3 #1$}\mkern-1.5mu\raise 0.8pt\hbox{$
#3/$}\mkern-1.1mu\lower 1.5pt\hbox{$#3 #2$}}
\def\qed{\hfill $\vcenter{\hrule height .3mm
\hbox {\vrule width .3mm height 2.1mm \kern 2mm \vrule width .3mm
height 2.1mm} \hrule height .3mm}$ \bigskip}
\begin{document}

\title{Eigenvalue distribution  of optimal transportation}
\author{Bo'az B. Klartag\thanks{School of Mathematical Sciences, Tel Aviv University, Tel Aviv 69978, Israel. E-mail: klartagb@tau.ac.il }, $\ $
Alexander V. Kolesnikov\thanks{Faculty of Mathematics, National Research University Higher School of Economics, Moscow, Russia. Email: sascha77@mail.ru}}
\date{}
\maketitle

\abstract{We investigate the Brenier map $\nabla \Phi$ between the uniform measures on
 two convex domains in $\RR^n$,
or more generally, between two log-concave probability measures on $\RR^n$.
We show that the eigenvalues of the Hessian matrix $D^2 \Phi$
exhibit remarkable concentration properties on a multiplicative scale, regardless of the choice
of the two measures or the dimension~$n$.
}

\section{Introduction}
\label{sec_intro}

Let $\mu$ and $\nu$ be two absolutely-continuous probability measures on $\RR^n$.
It was discovered by Brenier \cite{brenier} and McCann \cite{mccann}
that there exists a convex function $\Phi$ on $\RR^n$ with $(\nabla \Phi)_* \mu = \nu$, i.e.,
\begin{equation} \int_{\RR^n} b(\nabla \Phi(x)) d \mu(x) = \int_{\RR^n} b(x) d \nu(x) \label{eq_905} \end{equation}
for any $\nu$-integrable function $b: \RR^n \rightarrow \RR$. Moreover, the Brenier map $x \mapsto \nabla \Phi(x)$ is uniquely determined $\mu$-almost everywhere.
In this paper we consider the case where $\mu$ and $\nu$ are log-concave probability measures.
An absolutely-continuous probability measure on $\RR^n$ is called log-concave if it has a density $\rho$ which satisfies
$$ \rho \left( \lambda x + (1 - \lambda) y \right) \geq \rho(x)^{\lambda} \rho(y)^{1-\lambda} \qquad \qquad \qquad (x,y \in \RR^n, 0 < \lambda < 1). $$
The uniform measure on any convex domain is log-concave, as well as the Gaussian measure.
Write $Supp(\mu)$ for the interior of the support of $\mu$, which is an open, convex set in $\RR^n$. We make the assumption that
\begin{itemize}
\item[($\star$)] The function $\Phi$ is $C^2$-smooth in $Supp(\mu)$.
\end{itemize}
It follows from the works of Caffarelli \cite{Caf1, Caf2, ADM} that ($\star$) holds true
when each of the measures
$\mu$ and $\nu$ satisfies the following additional condition: Either the support of the measure
is the entire $\RR^n$, or else the support is a bounded, convex domain and
the density of the measure is bounded away from zero and from infinity
in this convex domain. It is fair to say that Caffarelli's regularity theory covers most
cases of interest, yet it is very plausible that ($\star$) is in fact always correct, without any
additional  conditions.

\medskip As it turns out, the positive-definite Hessian matrix $D^2 \Phi(x)$ exhibits remarkable regularity in the behavior of
its eigenvalues. We write $Var[X]$  for the variance
of the random variable $X$.
\begin{theorem} Let $\mu, \nu$ be absolutely-continuous, log-concave probability measures on $\RR^n$.
Let $\nabla \Phi$ be the Brenier map between $\mu$ and $\nu$, and assume ($\star$).
Write $0 < \lambda_1(x) \leq \ldots \leq \lambda_n(x)$ for the eigenvalues of the matrix $D^2 \Phi(x)$,
repeated according to their multiplicity.
Let $X$ be a random vector
in $\RR^n$ that is distributed according to  $\mu$. Then, for $i=1,\ldots,n$,
$$ Var \left[ \log \lambda_i(X) \right] \leq 4. $$
\label{thm1}
\end{theorem}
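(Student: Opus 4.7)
The starting point is the Monge--Amp\`ere equation. Writing $\mu = e^{-V} dx$ and $\nu = e^{-W} dx$ with $V, W$ convex, the identity $(\nabla \Phi)_* \mu = \nu$ gives
$$
\sum_{i=1}^{n} \log \lambda_i(x) \;=\; \log \det D^2 \Phi(x) \;=\; V(x) - W(\nabla \Phi(x)).
$$
Since $\nabla \Phi(X) \sim \nu$ whenever $X \sim \mu$, and since the classical Bobkov--Madiman-type inequality gives $Var[V(X)] \leq n$ for every log-concave probability $e^{-V} dx$ on $\RR^n$, the $L^2$ triangle inequality immediately yields $Var[\sum_i \log \lambda_i(X)] \leq (\sqrt n + \sqrt n)^2 = 4n$. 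Thus a bound of $4$ on each $Var[\log \lambda_i(X)]$ is already visible \emph{on average} over $i$, and the universal constant $4$ is the fingerprint of the estimate $Var_\mu[V] + Var_\nu[W] \leq 2n$ combined through the triangle inequality.

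The plan is to upgrade this averaged bound to a per-eigenvalue bound by reducing to dimension one. The same reasoning already gives $Var[\log \Phi''(X)] \leq 4$ in the one-dimensional case, using $Var_\mu[V] \leq 1$ and $Var_\nu[W] \leq 1$. To transplant this to dimension $n$, I would seek a one-dimensional interpretation of each $\log \lambda_i(x)$: for every $x$ the value $\lambda_i(x) = \langle D^2 \Phi(x) v_i(x), v_i(x) \rangle$ is a directional second derivative, and one would like to realize it as the Jacobian of a one-dimensional Brenier map between two log-concave measures on a line. Natural tools for this reduction are Klartag's needle decomposition of $\mu$ (coupled with a compatible decomposition of $\nu$ through $\nabla \Phi$), or an infinitesimal argument based on a Brascamp--Lieb / Prekopa second-variation inequality applied with test functions adapted to the eigenstructure of $D^2 \Phi$.

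The main obstacle is that the eigenvector field $v_i(x)$ varies with $x$ and cannot be synchronized with any canonical decomposition of $\mu$; moreover, the restriction of $\nabla \Phi$ to a line in $\RR^n$ is not the Brenier map between one-dimensional marginals, so a direct slicing will not work. I expect the argument to navigate this by either (i) a needle-type decomposition chosen adaptively so that on each needle $\log \lambda_i$ arises from a genuine one-dimensional transport between log-concave conditionals, or (ii) a Monge--Amp\`ere-based integration by parts that isolates a single $\log \lambda_i$ by pairing it with an auxiliary log-concave reference whose potential has variance bounded by $1$. This reduction is the principal technical step; once the problem is brought into one dimension, the constant $4$ emerges transparently from the computation in the first paragraph.
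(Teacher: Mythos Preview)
Your averaged bound $Var\bigl[\sum_i \log\lambda_i(X)\bigr] \leq 4n$ via the Monge--Amp\`ere identity and the log-concave variance inequality is correct and elegant, and the one-dimensional case is indeed a clean consequence. But the step you call ``the principal technical step''---passing from the sum to each individual $\log\lambda_i$---is not a technicality to be filled in; it is the entire content of the theorem, and neither of your proposed mechanisms delivers it.

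Needle decomposition cannot work here in the way you suggest: the restriction of $\nabla\Phi$ to a needle of $\mu$ is, as you yourself note, not the Brenier map between one-dimensional conditionals, and there is no adaptive choice of needles that turns $\lambda_i(x)$ into the Jacobian of a one-dimensional transport. The $i$th eigenvalue is not a directional second derivative along any fixed line---the eigenvector $v_i(x)$ moves with $x$---so there is no one-dimensional transport problem lurking beneath it. Your option (ii) is too vague to assess, but any integration-by-parts argument that actually isolates $Var[\log\lambda_i]$ will have to confront the full Hessian, not a slice of it.

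The paper takes a completely different route. It equips $Supp(\mu)$ with the Riemannian metric $D^2\Phi$ and uses Bakry--\'Emery $\Gamma_2$-calculus to prove the pointwise Bochner-type inequality $\Gamma_2(u) \geq \tfrac{1}{4}\,\Phi^{ik}_\ell \Phi^{j\ell}_k u_i u_j$. The key observation is that the right-hand side is exactly the pull-back, under $x\mapsto D^2\Phi(x)$, of the standard Riemannian metric on $M_n^+(\RR)$. Dualizing the Bochner inequality then yields a Poincar\'e inequality with constant $4$ for the push-forward $\theta = (D^2\Phi)_*\mu$ on $M_n^+(\RR)$. Theorem~\ref{thm1} follows because the map $A \mapsto (\log\lambda_1(A),\ldots,\log\lambda_n(A))$ is $1$-Lipschitz from $(M_n^+(\RR),dist)$ to Euclidean $\RR^n$. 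In particular, the constant $4$ arises from the $\tfrac{1}{4}$ in the Bochner estimate, not from $(\sqrt{1}+\sqrt{1})^2$; the coincidence in dimension one is suggestive, but the mechanisms are genuinely different.
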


Thus, on a multiplicative scale, the eigenvalues of $D^2 \Phi$ are quite stable.
Note that the multiplicative scale is indeed the natural scale in the generality of Theorem \ref{thm1}:
By applying
appropriate linear transformations to $\mu$ and $\nu$, one may effectively multiply all eigenvalues by an arbitrary positive constant.
The variance bound in Theorem \ref{thm1} follows from a Poincar\'e inequality which we now formulate.
For $x \in Supp(\mu)$ set
$$ \Lambda(x) = \left( \log \lambda_1(x), \ldots, \log \lambda_n(x) \right).
$$
We write $| \cdot |$ for the standard Euclidean norm in $\RR^n$.

\begin{theorem} Under the notation and assumptions of Theorem \ref{thm1}, for any
locally-Lipschitz function $f: \RR^n \rightarrow \RR$ with $\EE \left| f(\Lambda(X)) \right| < \infty$,
$$ Var \left[ f(\Lambda(X)) \right] \leq 4 \EE |\nabla f|^2(\Lambda(X)),
$$
whenever the right-hand side is finite.
At the points in which $f$ is not continuously differentiable, we  define $|\nabla f|$
via (\ref{eq_232}) below.
\label{thm2}
\end{theorem}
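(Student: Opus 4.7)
My plan is to derive the Poincar\'e inequality of Theorem~\ref{thm2} from the Brascamp-Lieb inequality applied to $\mu$ itself, using the Monge-Amp\`ere equation
$$V(x) = W(\nabla\Phi(x)) + \sum_{i=1}^n \log\lambda_i(x)$$
(where $\mu = e^{-V}dx$ and $\nu = e^{-W}dy$) to convert the problem into a pointwise matrix inequality that genuinely exploits the log-concavity of both endpoints. The appearance of the constant $4 = 2^2$ is suggestive of the Poincar\'e constant of a one-sided exponential distribution, hinting that the push-forward $\Lambda_*\mu$ should exhibit exponential-type concentration along each coordinate.

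First I would reduce, by standard smoothing and truncation, to the case of smooth and strictly convex $V, W$ and smooth, compactly supported $f$, so that Brascamp-Lieb applies in its classical form:
$$Var_\mu[f\circ\Lambda] \le \int \langle (D^2V)^{-1}\nabla(f\circ\Lambda),\,\nabla(f\circ\Lambda)\rangle\,d\mu.$$
The chain rule, combined with the first-order perturbation formula for eigenvalues, gives $\nabla(f\circ\Lambda) = \sum_{i=1}^n (\partial_i f)(\Lambda)\,\nabla\log\lambda_i$ with $(\nabla\log\lambda_i)_k = \lambda_i^{-1}\,v_i^\top(\partial_k D^2\Phi)\,v_i$, where $v_i$ is the unit eigenvector of $D^2\Phi$ corresponding to $\lambda_i$. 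So the right-hand side is a quadratic form in the third derivatives of $\Phi$, weighted by $(D^2V)^{-1}$.

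The heart of the argument would then be the pointwise matrix inequality
$$\langle (D^2V)^{-1}\nabla(f\circ\Lambda),\,\nabla(f\circ\Lambda)\rangle \le 4\,|\nabla f|^2(\Lambda(x))$$
at each $x \in Supp(\mu)$. Differentiating the Monge-Amp\`ere identity twice yields
$$D^2V = D^2\Phi\,(D^2W)(\nabla\Phi)\,D^2\Phi + \sum_k (\partial_k W)(\nabla\Phi)\,\partial_k(D^2\Phi) + D^2\log\det D^2\Phi,$$
so $D^2V$ is built from the convexity data of $W$ together with second-order information about $\log\det D^2\Phi = \sum_i \log\lambda_i$. Working in an eigenbasis of $D^2\Phi$ at a fixed point, I would try to combine the positivity of $(D^2W)(\nabla\Phi)$ with algebraic identities connecting $\nabla\log\lambda_i$ to $\nabla V$, hoping that the two sources of convexity contribute synergistically.

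The principal obstacle will be executing this pointwise reduction cleanly. The expression $\nabla\log\lambda_i$ involves only the \emph{diagonal} contractions $v_i^\top(\partial_k D^2\Phi)v_i$ of the third-derivative tensor of $\Phi$, whereas $(D^2V)^{-1}$ interacts with the full tensor, so one must carefully disentangle diagonal from off-diagonal and invoke the identity
$\sum_i \partial_k\log\lambda_i = \partial_k V - \sum_l \Phi_{kl}(\partial_l W)(\nabla\Phi)$
obtained by differentiating the Monge-Amp\`ere equation once. I anticipate that a Cauchy-Schwarz step tailored to this identity, together with the special structure of $D^2\log\det D^2\Phi$, is what ultimately produces the sharp constant $4$; arranging the algebra so that both log-concavity hypotheses enter at precisely the right places is the main challenge.
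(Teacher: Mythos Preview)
Your proposal contains a genuine gap: the pointwise inequality
\[
\langle (D^2V)^{-1}\nabla(f\circ\Lambda),\,\nabla(f\circ\Lambda)\rangle \le 4\,|\nabla f|^2(\Lambda(x))
\]
on which the whole argument rests is simply false. To see this, let $\mu$ be the uniform measure on a convex body $K\subset\RR^n$ with $n\ge 2$. Then $D^2V\equiv 0$ in the interior of $K$, so for the left-hand side even to be finite one would need $\nabla(f\circ\Lambda)\equiv 0$, i.e.\ the eigenvalues of $D^2\Phi$ would have to be constant. But for a generic log-concave target $\nu$ (already for $\nu$ uniform on a different convex body) the Brenier map is not affine and $D^2\Phi$ varies nontrivially. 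Your smoothing step does not rescue this: if you replace $V$ by a strictly convex $V_\eps$ with $D^2V_\eps\to 0$ in the interior, then $(D^2V_\eps)^{-1}\to\infty$ while $\nabla(f\circ\Lambda_\eps)$ stays bounded away from zero, so no uniform constant can survive the limit. The Brascamp--Lieb inequality with weight $(D^2V)^{-1}$ is therefore the wrong spectral-gap input: it sees only the convexity of $\mu$, whereas the constant $4$ must come from a mechanism that balances the convexity of \emph{both} $\mu$ and $\nu$ against third-order information in $\Phi$.

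The paper's route is structurally different. Rather than the Euclidean Hessian $D^2V$, one works on the weighted Riemannian manifold $M_{\mu,\nu}=(Supp(\mu),\,D^2\Phi,\,\mu)$, whose Laplacian is $Lu=\Phi^{ij}u_{ij}-W_j(\nabla\Phi)u_j$. The Bochner/$\Gamma_2$ computation on this manifold yields the pointwise bound
\[
\Gamma_2(u)\ \ge\ \tfrac14\,\Phi^{ik}_{\ell}\Phi^{j\ell}_{k}\,u_i u_j,
\]
and the tensor $g_{ij}=\Phi^{k}_{i\ell}\Phi^{\ell}_{jk}$ on the right is exactly the pullback of the standard Riemannian metric on $M_n^{+}(\RR)$ under $x\mapsto D^2\Phi(x)$. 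Dualizing (in the sense of $\int(Lu)^2\,d\mu=\int\Gamma_2(u)\,d\mu$) gives a Poincar\'e inequality for the push-forward $\theta=(D^2\Phi)_{*}\mu$ on $M_n^{+}(\RR)$ with constant $4$. The passage to eigenvalues is then a separate, purely geometric step: the map $A\mapsto(\log\lambda_1(A),\ldots,\log\lambda_n(A))$ is $1$-Lipschitz from $(M_n^{+}(\RR),\text{standard metric})$ to Euclidean $\RR^n$. So the ``third-derivative quadratic form'' you anticipated does appear, but it is compared with the transportation metric $D^2\Phi$ and controlled via the Bakry--\'Emery Ricci tensor of $M_{\mu,\nu}$, not via $(D^2V)^{-1}$.
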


Denote $\pi = \Lambda_*(\mu)$, the push-forward of the measure $\mu$ under the map $\Lambda$.
Theorem \ref{thm2} is a spectral gap estimate for the metric-measure space $(\RR^n, | \cdot |, \pi)$.
Gromov and Milman \cite{GM} proved that a spectral gap estimate implies exponential concentration of
Lipschitz functions. Therefore, Theorem \ref{thm2} admits the following immediate corollary:

\begin{corollary} We work under the notation and assumptions of Theorem \ref{thm1}.
Let $f: \RR^n \rightarrow \RR$ be a $1$-Lipschitz function (i.e., $|f(x) - f(y)| \leq |x-y|$).
Denote $A = \EE f(\Lambda(X))$. Then $A$ is finite and
$$ \EE \exp( c \left| f(\Lambda(X)) - A \right| ) \leq 2, $$
where $c > 0$ is a universal constant.
\label{cor1}
\end{corollary}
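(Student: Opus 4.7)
The plan is to deduce the corollary from the Poincar\'e inequality of Theorem \ref{thm2} via the Bobkov--Ledoux iteration, a quantitative form of the Gromov--Milman principle referenced in the text: a Poincar\'e inequality with constant $C_P$ implies that every $1$-Lipschitz function has exponential concentration with rate of order $1/\sqrt{C_P}$. Since Theorem \ref{thm2} gives such a Poincar\'e inequality with $C_P = 4$ for the pushforward $\pi = \Lambda_*(\mu)$ on $(\RR^n, |\cdot|)$, and $f$ is $1$-Lipschitz in the Euclidean metric, the corollary follows from this general implication applied to $\pi$.

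Concretely, suppose for the moment that all exponential moments below are finite. Setting $g = \exp(\lambda (f - A)/2)$ for $0 < \lambda < 1$, we have $|\nabla g| \leq (\lambda/2) g$ almost everywhere, so Theorem \ref{thm2} applied to $g$ gives $\EE g^2 - (\EE g)^2 \leq 4 \EE |\nabla g|^2 \leq \lambda^2 \EE g^2$, which rearranges to the recursion
$$ H(\lambda) \leq (1 - \lambda^2)^{-1} H(\lambda/2)^2, \qquad H(\lambda) := \EE e^{\lambda(f(\Lambda(X)) - A)}. $$
Iterating $n$ times down to scale $\lambda/2^n$, the variance bound $H''(0) = \mathrm{Var}[f(\Lambda(X))] \leq 4$ supplied by Theorem \ref{thm2} forces $H(\lambda/2^n)^{2^n} \to 1$, while the accumulated product $\prod_{k=0}^{\infty}(1 - (\lambda/2^k)^2)^{-2^k}$ converges for $\lambda$ bounded away from $1$. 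This yields a uniform bound $H(\lambda) \leq C_0$ on some interval $\lambda \in [0, \lambda_0]$. The same bound with $-f$ in place of $f$, combined with $\EE e^{c|f(\Lambda(X)) - A|} \to 1$ as $c \to 0^+$ by dominated convergence, produces a universal $c > 0$ with $\EE e^{c|f(\Lambda(X)) - A|} \leq 2$.

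What remains is to justify the integrability and the finiteness of $A$, which I would handle via a preliminary truncation: the function $f_R = \max(-R, \min(R, f))$ is bounded and $1$-Lipschitz, so all the exponential moments in the iteration are trivially finite, and the argument above gives a bound with constants independent of $R$. Monotone convergence as $R \to \infty$ transfers the bound to $f$ itself, establishes that $A = \lim_R A_R$ is finite, and completes the proof. The main technical obstacle is executing the iteration uniformly in $R$ and verifying convergence of the accumulated product; beyond this the argument is standard, and could alternatively be replaced by a direct invocation of the Gromov--Milman theorem \cite{GM}.
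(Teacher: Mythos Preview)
Your proposal is correct and follows essentially the same approach as the paper: both deduce the corollary from Theorem~\ref{thm2} via the Gromov--Milman principle that a Poincar\'e inequality implies exponential concentration of Lipschitz functions. The paper simply cites~\cite{GM} for this implication and treats the corollary as immediate, while you spell out one standard proof of it (the Bobkov--Ledoux iteration) together with the truncation needed to justify integrability and the finiteness of $A$.
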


\begin{remark}{\rm
Corollary \ref{cor1} implies that $\EE e^{c |\Lambda(X)|} < \infty$.
Consequently, one may replace the condition $\EE \left| f(\Lambda(X)) \right| < \infty$ in Theorem \ref{thm2}
by the requirement that $e^{-c |x|} |f(x)|$ is bounded in $\RR^n$, for a certain universal constant $c > 0$. } \label{rem_1115} \end{remark}

Our next result is that the diagonal elements of the matrix $D^2 \Phi(x)$
are also concentrated on a logarithmic scale, pretty much like the eigenvalues.

\begin{theorem} We work under the notation and assumptions of Theorem \ref{thm1}.
Fix $v \in \RR^n$, let $H(x) = \log \left( D^2 \Phi(x) v \cdot v \right)$
and denote $Y = H(X)$. Then,
\begin{enumerate}
\item[(i)] $\displaystyle Var \left[ Y \right] \leq 4$.
\item[(ii)] For any locally-Lipschitz function $f: \RR \rightarrow \RR$ with $\EE \left| f(Y) \right| < \infty$,
$$
Var \left[ f(Y) \right] \leq 4 \EE \left| f^{\prime} \right|^2(Y).
$$
\item[(iii)] For any $1$-Lipschitz function $f: \RR \rightarrow \RR$, denoting $A = \EE f(Y)$ we have
that $A \in \RR$ and
$$ \EE \exp( c \left| f(Y) - A \right| ) \leq 2, $$
where $c > 0$ is a universal constant.
\end{enumerate}
\label{thm3}
\end{theorem}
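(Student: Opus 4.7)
Parts (i) and (iii) are formal consequences of part (ii). For (iii), one invokes the Gromov--Milman theorem exactly as in the deduction of Corollary \ref{cor1} from Theorem \ref{thm2}: apply (ii) first to bounded Lipschitz truncations of $f$, obtaining uniform variance bounds and hence exponential tails, which forces $A = \EE f(Y) \in \RR$ and yields the claimed exponential estimate. Part (i) is then (ii) applied to $f(t) = t$, the integrability of $Y$ having been established in the proof of (iii).

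The substantive content is the Poincar\'e inequality (ii). A first reduction is rotational: replacing $\mu, \nu$ by $U_* \mu, U_* \nu$ for an orthogonal $U$ with $U v = e_1$ turns the Brenier potential into $\Phi \circ U^\top$, leaves the distribution of $H(X)$ unchanged, and preserves log-concavity of both measures. We may therefore assume $v = e_1$, so that
\[
H(x) = \log \partial_{11} \Phi(x), \qqquad \nabla H(x) = \frac{\nabla \partial_{11} \Phi(x)}{\partial_{11} \Phi(x)}.
\]
The plan is then to mimic the proof of Theorem \ref{thm2} in this scalar setting. Writing $V = -\log \rho_\mu$ and $W = -\log \rho_\nu$ (both convex by log-concavity), differentiate the Monge--Amp\`ere equation
\[
\log \det D^2 \Phi(x) \;=\; V(x) - W(\nabla \Phi(x))
\]
twice in the $e_1$-direction. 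The convexity estimates $\partial_{11} V \ge 0$ and $(D^2 W)\bigl(\partial_1 \nabla \Phi, \partial_1 \nabla \Phi\bigr) \ge 0$ turn the resulting identity into a pointwise Bochner-type inequality controlling the squared Hilbert--Schmidt norm of $(D^2 \Phi)^{-1/2}\,\partial_1 D^2 \Phi\,(D^2 \Phi)^{-1/2}$. Multiplying by a test weight $f'(H)^2$ and integrating against $\mu$, with a single integration by parts in the $e_1$-direction to handle the first-order term $\nabla W \cdot \partial_{11} \nabla \Phi$ (using $\nabla V(x) = D^2 \Phi(x) \nabla W(\nabla \Phi(x)) + \nabla \log \det D^2 \Phi(x)$), should produce exactly the Poincar\'e inequality in (ii), with the same constant $4$ as in Theorem \ref{thm2}.

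The main obstacle, and the step where one has to match the output of the Monge--Amp\`ere computation to the right-hand side $\EE |f'|^2(Y)$, is the extraction of the scalar $|\nabla H|^2 = |\nabla \partial_{11} \Phi|^2 / (\partial_{11} \Phi)^2$ from the full third-derivative quadratic form. The key observation is that $\nabla \partial_{11} \Phi(x) = (\partial_{11i} \Phi(x))_{i=1}^n$ is precisely a distinguished row of the matrix $\partial_1 D^2 \Phi$, so a Cauchy--Schwarz application in the $D^2 \Phi$-metric bounds $|\nabla H|^2$ by (a multiple of) the HS norm appearing on the Monge--Amp\`ere side. Compared with Theorem \ref{thm2}, the present setting is genuinely simpler since $H$ is smooth and no differentiation of eigen-projections is required; I therefore expect the algebra to go through with the same constant $4$ and without new ingredients beyond those already used in the proof of Theorem \ref{thm2}.
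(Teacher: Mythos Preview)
Your proposal takes a genuinely different route from the paper, and it contains a real gap.

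\medskip
\textbf{The paper's proof.} The paper does \emph{not} re-differentiate the Monge--Amp\`ere equation for this theorem. The argument is one line: in the proof of Theorem~\ref{thm2} replace Lemma~\ref{lem_2200} by Lemma~\ref{lem_2201}. Concretely, set $F(A)=f\bigl(\log(Av\cdot v)\bigr)$ on $M_n^+(\RR)$; Lemma~\ref{lem_2201} says $A\mapsto\log(Av\cdot v)$ is $1$-Lipschitz for the standard Riemannian metric, hence $|\nabla F|(A)\le |f'|\bigl(\log(Av\cdot v)\bigr)$. Now apply Theorem~\ref{thm_539} to $F$ and read off~(ii). No rotation to $v=e_1$, no second differentiation of the transport equation, no new integration by parts, and no need to revisit the regularity issues already handled once and for all in Section~\ref{sec_regularity}. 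Parts (i) and (iii) then follow exactly as you say.

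\medskip
\textbf{What you are doing instead, and where it breaks.} Your plan is effectively to re-derive, for the particular function $F(A)=f(\log A_{11})$, the Poincar\'e inequality of Proposition~\ref{prop_poincare}/Theorem~\ref{thm_539} from scratch, starting from the twice-differentiated Monge--Amp\`ere identity (this is equation~(\ref{eq_445})). That identity does give the pointwise lower bound $L(\Phi_{11})+V_{11}\ge \Phi^{jk}_1\Phi_{1jk}$, and your Cauchy--Schwarz observation $\Phi_{11}^{-1}\Phi^{ij}\Phi_{11i}\Phi_{11j}\le \Phi^{jk}_1\Phi_{1jk}$ is correct and is precisely the content of Lemma~\ref{lem_2201} at the infinitesimal level. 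But the step ``multiply by $f'(H)^2$ and integrate'' does not produce a Poincar\'e inequality: you get control of $\int f'(H)^2 L(\Phi_{11})\,d\mu$ and of third-derivative energies, not of $\mathrm{Var}[f(H)]$. The passage from a Bochner/$\Gamma_2$ lower bound to a Poincar\'e inequality requires a \emph{duality} step---solve (approximately) $Lu=-(f(H)-\EE f(H))$, use $\int(Lu)^2\,d\mu=\int\Gamma_2(u)\,d\mu$, and bound $-\int f(H)\,Lu\,d\mu$ via Cauchy--Schwarz---exactly as in the proof of Proposition~\ref{prop_poincare}. Your write-up omits this mechanism, and the direct multiplication you describe cannot substitute for it. Even if you supply the duality argument, you would also have to redo the density Lemma~\ref{lem_1043} and the entire approximation scheme of Section~\ref{sec_regularity}; the paper's modular route avoids all of this.

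\medskip
In short: the substantive inequality you need is already packaged as Theorem~\ref{thm_539}, and the only new ingredient specific to Theorem~\ref{thm3} is Lemma~\ref{lem_2201}. Use them.
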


All of the assertions made so far follow from Theorem \ref{thm_539} below, which is in fact a sound reformulation of \cite[Theorem 1.4]{K_moment}.
The results in \cite{K_moment} were obtained under a technical assumption dubbed ``regularity at infinity'',
which we shall address in this paper. Our argument is based on analysis of the
transportation metric: This means that we use the positive-definite Hessian $D^2 \Phi$
in order to define a Riemannian metric in $Supp(\mu)$. The weighted Riemannian manifold
$$ M_{\mu, \nu} = \left(Supp(\mu), D^2 \Phi , \mu \right) $$
was studied in \cite{kol}, where it was shown that the associated Ricci-Bakry-\'Emery tensor is non-negative
when $\mu$ and $\nu$ are log-concave. We will also  consider the map
$$ x \mapsto D^2 \Phi(x) $$
from $Supp(\mu) \subseteq \RR^n$ into the space of positive-definite matrices. The space of positive-definite matrices is endowed with a natural Riemannian metric,
which fits very nicely with computations related to the weighted Riemannian manifold $M_{\mu, \nu}$. This leads
to a certain Poincar\'e inequality with respect to the standard Riemannian metric on the space of
positive-definite matrices, formulated in Theorem \ref{thm_539} below .

\medskip We have tried to make the exposition self-contained, apart from the regularity theory of mass-transport. The rest of this paper is organized as follows: In Section \ref{sec_pos} we recall some well-known
constructions related to positive-definite matrices. In Section \ref{sec_gamma_2} and Section \ref{sec_dual} we prove the main results
under regularity assumptions by employing the Bakry-\'Emery $\Gamma_2$-calculus. Section \ref{sec_regularity} is devoted to the elimination of these regularity assumptions.
In Section \ref{sec6} we complete the proofs of the theorems formulated above.
We write $x \cdot y$ for the standard scalar product of $x, y \in \RR^n$.
We denote derivatives by
$\partial_k f = f_k = \partial f / \partial x_k$
and
$f_{ij} = \partial^2 f / (\partial x_i \partial x_j)$.
By a smooth function we mean a $C^{\infty}$-smooth one.
We write $\log$ for the natural logarithm, and $Tr(A)$ stands for the trace of the matrix $A$.

\medskip
\emph{Acknowledgements.} We would like to thank Emanuel Milman for interesting discussions.
The first named author was supported by a grant from the European Research Council (ERC).
The second named author was supported by RFBR project 12-01-33009 and  the DFG project  CRC 701.
This study (research grant No 14-01-0056) was supported by The National Research University–-Higher School of Economics' Academic Fund Program in 2014/2015.

\section{Positive-definite quadratic forms}
\label{sec_pos}

This section surveys  standard material on positive-definite matrices.
Denote by $M_n^+(\RR)$ the collection of all symmetric, positive-definite $n \times n$ matrices.
For a function $f: (0, \infty) \rightarrow \RR$ and $A \in M_n^+(\RR)$ we may define the symmetric matrix $f(A)$
via the spectral theorem. In other words,
$$ f \left( \sum_{i=1}^n \lambda_i v_i \otimes v_i \right) = \sum_{i=1}^n f(\lambda_i) v_i \otimes v_i $$
for any orthonormal basis $v_1,\ldots,v_n \in \RR^n$ and $\lambda_1,\ldots,\lambda_n > 0$, where we write $x \otimes x = (x_i x_j)_{i,j=1,\ldots,n}$
for $x = (x_1,\ldots,x_n) \in \RR^n$.

\begin{lemma} For any $A, B \in M_n^+(\RR)$,
\begin{equation} \left \| \log \left( A^{1/2} B A^{1/2} \right) \right \|_{HS} \leq \left \| \log(A) \right \|_{HS} + \left \| \log(B) \right \|_{HS}
\label{eq_1119} \end{equation}
where $\| \cdot \|_{HS}$ stands for the Hilbert-Schmidt norm.
\label{lem_1553}
\end{lemma}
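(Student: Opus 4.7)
The inequality is really an instance of the triangle inequality on the symmetric space $M_n^+(\RR) \cong GL_n(\RR)/O(n)$ equipped with its affine-invariant Riemannian metric. The plan is to exploit this.

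First, I recall (or set up) the standard Riemannian structure on $M_n^+(\RR)$: at a point $P \in M_n^+(\RR)$ one sets
$$ \langle X, Y \rangle_P = Tr(P^{-1} X P^{-1} Y) \qquad \text{for } X, Y \in T_P M_n^+(\RR) = \text{Sym}(n). $$
The geodesic emanating from the identity in the direction $X \in \text{Sym}(n)$ is $t \mapsto \exp(tX)$, so the Riemannian distance from $I$ to an element $C \in M_n^+(\RR)$ equals $\|\log C\|_{HS}$. The congruence action $Q \mapsto g Q g^T$ is an isometry for every $g \in GL_n(\RR)$ (this is immediate from the definition), and choosing $g = P^{-1/2}$ at a base point $P$ yields the general distance formula
$$ d(P, Q) = \bigl\| \log \bigl( P^{-1/2} Q P^{-1/2} \bigr) \bigr\|_{HS}. $$

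With this metric in hand, the lemma reduces to a single application of the triangle inequality along the polygonal path $I \to A \to A^{1/2} B A^{1/2}$. By the distance formula,
$$ d\bigl(I, A^{1/2} B A^{1/2}\bigr) \leq d(I, A) + d\bigl(A, A^{1/2} B A^{1/2}\bigr), $$
and the left-hand side is exactly $\|\log(A^{1/2} B A^{1/2})\|_{HS}$. The first term on the right is $\|\log A\|_{HS}$. For the second, the $GL_n$-invariance (applied with $g = A^{-1/2}$) gives
$$ d\bigl(A, A^{1/2} B A^{1/2}\bigr) = d\bigl(A^{-1/2} A A^{-1/2},\ A^{-1/2} A^{1/2} B A^{1/2} A^{-1/2}\bigr) = d(I, B) = \|\log B\|_{HS}. $$
Combining these identities yields (\ref{eq_1119}).

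The only nontrivial ingredient is that the formula $d(P, Q) = \|\log(P^{-1/2} Q P^{-1/2})\|_{HS}$ really defines a metric, i.e.\ satisfies the triangle inequality; this in turn rests on the fact that the affine-invariant Riemannian structure on $M_n^+(\RR)$ is a genuine Riemannian metric whose distance function is given by this formula. I expect this to be the main obstacle in presentation rather than in substance: once this standard material on the symmetric space $GL_n/O_n$ is recorded in this section, the lemma is essentially a one-line consequence of the triangle inequality together with the left-congruence invariance of the metric.
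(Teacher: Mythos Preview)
Your argument is correct in substance, but it takes a genuinely different route from the paper. The paper proves the lemma directly and elementarily via majorization: it observes that the functionals $D_k(T) = \prod_{i=1}^k \lambda_i(T)$ (top-$k$ eigenvalue products) are submultiplicative under matrix multiplication, obtains the inequalities $\sum_{i=1}^k \gamma_i \le \sum_{i=1}^k (\alpha_i + \beta_i)$ for the log-eigenvalues of $A^{1/2}BA^{1/2}$, $A$, and $B$, and then applies the Weyl--Polya inequality with $h(t) = (t_+)^2$ (and the same for the inverses) followed by Cauchy--Schwarz. No Riemannian geometry enters at all.

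More to the point, the logical order is reversed. In the paper, Lemma~\ref{lem_1553} is the \emph{foundation} on which the metric structure of $M_n^+(\RR)$ is built: the triangle inequality for $dist$ is deduced from (\ref{eq_1119}) together with the congruence invariance (\ref{eq_1556}), and only afterwards does Lemma~\ref{lem_454} identify $dist$ with a Riemannian length. Your proposal instead assumes the full distance formula $d(P,Q) = \|\log(P^{-1/2}QP^{-1/2})\|_{HS}$ for the affine-invariant metric and reads off (\ref{eq_1119}) as a triangle inequality. That is perfectly valid if you import the standard theory of the symmetric space $GL_n(\RR)/O(n)$ (nonpositive curvature, global minimality of geodesics), but it is not self-contained in the way the paper's proof is, and within the paper's own development it would be circular. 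What your approach buys is conceptual clarity---the lemma is indeed ``just'' the triangle inequality---at the cost of invoking heavier background; the paper's majorization argument is more hands-on but requires nothing beyond Weyl--Polya.
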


\begin{proof}
For an  $n \times n$  matrix $T$ and $k = 1,\ldots,n$ we define
\begin{equation}  D_k(T) = \sup_{E \subseteq \RR^n \atop{\dim(E) = k}} \frac{Vol_k(T(B^n \cap E))}{Vol_k(B^n \cap E)},
\label{eq_1045} \end{equation}
where $B^n = \{ x \in \RR^n \, ; \, |x| < 1 \}$, and the supremum in (\ref{eq_1045}) runs over all $k$-dimensional
subspaces in $\RR^n$. Thus, an application of the linear transformation $A$ may increase $k$-dimensional volumes
by a factor of at most $D_k(A)$. It follows that
for any  $n \times n$  matrices $A$ and $B$,
\begin{equation}  D_k(AB) \leq D_k(A) D_k(B) \qquad \qquad (k=1,\ldots,n).
\label{eq_1533} \end{equation}
In the case where $A \in M_n^+(\RR)$, we have $D_k(A) = \prod_{i=1}^k \lambda_i$,
where $\lambda_1 \geq \lambda_2 \geq \ldots \geq \lambda_n > 0$ are the eigenvalues of $A$.
Assume that $A, B \in M_n^+(\RR)$. Denote the eigenvalues of the symmetric, positive-definite
matrix $A^{1/2} B A^{1/2}$ by $e^{\gamma_1} \geq \ldots \geq e^{\gamma_n} > 0$. Then, for $k=1,\ldots,n$,
\begin{equation}
\prod_{i=1}^k e^{\gamma_i} = D_k \left(  A^{1/2} B A^{1/2} \right) \leq D_k(A^{1/2}) D_k(B) D_k(A^{1/2})
= D_k(A) D_k(B) = \prod_{i=1}^k (e^{\alpha_i} e^{\beta_i}),
\label{eq_1056}
\end{equation}
where $e^{\alpha_1} \geq \ldots \geq e^{\alpha_n} > 0$ are the eigenvalues of $A$, and
$e^{\beta_1} \geq \ldots \geq e^{\beta_n} > 0$ are the eigenvalues of $B$. We will next apply a lemma of Weyl \cite{weyl}, see also Polya \cite{polya}.
According to the inequality of Weyl and Polya, the inequalities (\ref{eq_1056}) entail that
\begin{equation} \sum_{i=1}^n h(\gamma_i) \leq \sum_{i=1}^n h(\alpha_i + \beta_i) \label{eq_1104}
\end{equation}
for any convex, non-decreasing function $h: \RR \rightarrow \RR$. For $t \in \RR$ denote $t_+ = \max \{ t, 0 \}$.
The function $t \mapsto (t_+)^2$ is convex and non-decreasing, hence from (\ref{eq_1104}),
\begin{equation}
\sum_{i=1}^n ((\gamma_i)_+)^2 \leq \sum_{i=1}^n ((\alpha_i + \beta_i)_+)^2. \label{eq_1532}
\end{equation}
By using (\ref{eq_1533}) for the inverse matrices, we conclude that for $k=1,\ldots,n$,
$$
\prod_{i=n-k+1}^n e^{-\gamma_{i}} = D_k \left(  A^{-1/2} B^{-1} A^{-1/2} \right) \leq D_k(A^{-1}) D_k(B^{-1})
= \prod_{i=n-k+1}^n (e^{-\alpha_i} e^{-\beta_i}).
$$
The  inequality of Weyl and Polya now implies that $\sum_{i=1}^n h(-\gamma_i) \leq \sum_{i=1}^n h(-\alpha_i - \beta_i)$
for any convex, non-decreasing function $h$. By again using  $h(t) = (t_+)^2$, we get
\begin{equation}
\sum_{i=1}^n ((-\gamma_i)_+)^2 \leq \sum_{i=1}^n ((-\alpha_i - \beta_i)_+)^2. \label{eq_1538}
\end{equation}
Adding (\ref{eq_1532}) and (\ref{eq_1538}), we finally obtain
\begin{equation}
\sum_{i=1}^n \gamma_i^2 \leq \sum_{i=1}^n (\alpha_i + \beta_i)^2 \leq \left( \sqrt{\sum_{i=1}^n \alpha_i^2} +
\sqrt{\sum_{i=1}^n \beta_i^2} \right)^2,
\label{eq_1539}
\end{equation}
where we used the Cauchy-Schwartz inequality in the last passage.
By taking the square root of (\ref{eq_1539}) we deduce (\ref{eq_1119}).
\end{proof}

For  two matrices $A, B \in M_n^+(\RR)$  set
\begin{equation}
 dist(A, B) = \left \| \log \left( A^{-1/2} B A^{-1/2} \right) \right \|_{HS}. \label{eq_1924}\end{equation}
Equivalently,  $dist(A, B)$ equals $\sqrt{\sum_i \log^2 \lambda_i}$,
where $\lambda_1,\ldots,\lambda_n > 0$ are the eigenvalues of the matrix $A^{-1} B$
which is conjugate to $A^{-1/2} B A^{-1/2}$. The latter equivalent definition of $dist$
shows that for any invertible $n \times n$ matrix $T$,
\begin{equation} dist \left( A, B \right) = dist \left( T^t A T, T^t B T \right) \qquad \qquad (A,B \in M_n^+(\RR^n)), \label{eq_1556}
\end{equation}
where $A^t$ is the transpose of the matrix $A$.
Observe too that $dist \left( A, B \right) = dist \left( A^{-1}, B^{-1} \right)$ for any $A, B \in M_n^+(\RR)$.
Lemma \ref{lem_1553} states that for  $A, B \in M_n^+(\RR)$,
\begin{equation}
dist(A,B) \leq dist(A, {\rm Id}) + dist({\rm Id}, B),
\label{eq_1554}
\end{equation}
where ${\rm Id}$ is the identity matrix.
From (\ref{eq_1556}) and (\ref{eq_1554}) one realizes that $dist$ satisfies the triangle inequality
in $M_n^+(\RR)$, hence it is a metric.
For $A \in M_n^+(\RR^n)$ and a symmetric $n \times n$ matrix $B$ we denote
$$ \| B \|_{A} = \left \| A^{-1/2} B A^{-1/2} \right \|_{HS} = \sqrt{ Tr \left[ (A^{-1} B)^2 \right] }.
$$
For a smooth curve $\gamma: [a,b] \rightarrow M_n^+(\RR)$ set
\begin{equation}
 Length(\gamma) = \int_a^b \left \| \dot{\gamma}(s) \right \|_{\gamma(s)} ds, \label{eq_519} \end{equation}
where $\dot{\gamma}(s) = \frac{d \gamma(s)}{ds}$ is a symmetric $n \times n$ matrix.
Then $Length$ is invariant under conjugations. That is, the length of the curve $\gamma(s)$
equals that of the curve $T^t \gamma(s) T$ for any invertible $n \times n$ matrix $T$.

\begin{lemma}
\begin{enumerate}
\item[(i)] For any $A \in M_n^+(\RR^n)$ and a symmetric $n \times n$ matrix $B$,
\begin{equation}  \lim_{\eps \rightarrow 0} \frac{dist^2(A + \eps B, A)}{\eps^2} = \| B \|^2_{A} =  Tr \left[ (A^{-1} B)^2 \right].
\label{eq_1931} \end{equation}
\item[(ii)] Let $A, B \in M_n^+(\RR^n)$ and consider the curve
$$ \gamma_{A, B}(s) = A^{1/2} \left( A^{-1/2} B A^{-1/2} \right)^s A^{1/2} \qquad \qquad (0 \leq s \leq 1). $$
Then $\gamma_{A,B}$ is a curve connecting $A$ and $B$
with $Length(\gamma_{A,B}) = dist(A,B)$.
\end{enumerate}
\label{lem_454}
\end{lemma}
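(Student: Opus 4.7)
My plan is to reduce both parts to a computation at the identity matrix by exploiting the conjugation invariance already established, namely (\ref{eq_1556}) for $dist$ and the analogous invariance of $Length$ stated just above the lemma.

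For part (i), I would apply (\ref{eq_1556}) with $T = A^{-1/2}$ to obtain
$$ dist^2(A + \eps B, A) = dist^2\bigl( {\rm Id} + \eps C, {\rm Id} \bigr), \qquad C := A^{-1/2} B A^{-1/2}, $$
which, by the definition (\ref{eq_1924}), equals $\| \log( {\rm Id} + \eps C ) \|_{HS}^2$. The matrix $C$ is symmetric, so I can diagonalize it and use the power series $\log(1 + \eps t) = \eps t - \eps^2 t^2/2 + O(\eps^3)$ entry by entry to get $\| \log( {\rm Id} + \eps C ) \|_{HS}^2 = \eps^2 \| C \|_{HS}^2 + O(\eps^3)$. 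Dividing by $\eps^2$ and passing to the limit gives $\| C \|_{HS}^2 = Tr(C^2) = Tr(A^{-1/2} B A^{-1} B A^{-1/2}) = Tr((A^{-1} B)^2)$, which is $\| B \|_A^2$ by definition.

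For part (ii), I first check the endpoints: $\gamma_{A,B}(0) = A^{1/2} \cdot {\rm Id} \cdot A^{1/2} = A$ and $\gamma_{A,B}(1) = A^{1/2} (A^{-1/2} B A^{-1/2}) A^{1/2} = B$, using that fractional powers of a positive-definite symmetric matrix are defined via the spectral theorem. Then, using the conjugation invariance of $Length$ with $T = A^{-1/2}$, I reduce to computing the length of the curve $\tilde{\gamma}(s) = C^s$ where $C = A^{-1/2} B A^{-1/2}$. Diagonalize $C = \sum_{i} \lambda_i v_i \otimes v_i$ in an orthonormal basis. Then $C^s = \sum_i \lambda_i^s \, v_i \otimes v_i$ and $\dot{\tilde{\gamma}}(s) = \sum_i (\log \lambda_i) \lambda_i^s \, v_i \otimes v_i$, so $C^{-s} \dot{\tilde{\gamma}}(s) = \sum_i (\log \lambda_i) \, v_i \otimes v_i$, a matrix independent of $s$. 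Consequently
$$ \| \dot{\tilde{\gamma}}(s) \|_{\tilde{\gamma}(s)}^2 = Tr\bigl[ (C^{-s} \dot{\tilde{\gamma}}(s))^2 \bigr] = \sum_{i=1}^n (\log \lambda_i)^2, $$
which is exactly $dist^2(A,B)$ since the $\lambda_i$ are the eigenvalues of $A^{-1/2} B A^{-1/2}$. Integrating the constant $\sqrt{\sum_i (\log \lambda_i)^2}$ over $s \in [0,1]$ yields $Length(\gamma_{A,B}) = dist(A,B)$.

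The computations are essentially routine once the conjugation invariance is in place; the only place that requires a moment's care is verifying that differentiating $C^s$ entry by entry in its eigenbasis is legitimate, which follows from the spectral definition of $f(A)$ recalled at the start of the section with $f(t) = t^s$ (smooth in $s$ for $t > 0$). No part of this argument looks like a genuine obstacle.
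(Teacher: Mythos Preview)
Your proposal is correct and follows essentially the same approach as the paper: both parts are reduced to $A = {\rm Id}$ via the conjugation invariance (\ref{eq_1556}) and the stated invariance of $Length$, after which the computations are carried out by diagonalizing (the paper writes this as $\dot\gamma(s) = B^s \log B$ rather than your explicit eigenbasis expansion, but the substance is identical).
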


\begin{proof} The invariance property (\ref{eq_1556}) implies that
$$ dist (A + \eps B, A) = dist ({\rm Id} + \eps A^{-1/2} B A^{-1/2}, {\rm Id}). $$
It therefore suffices to prove (i) under the additional assumption that $A = {\rm Id}$.
Let $\lambda_1,\ldots,\lambda_n > 0$ be the eigenvalues of $B$.
It follows  from (\ref{eq_1924}) that
$$  \lim_{\eps \rightarrow 0} \frac{dist^2 ({\rm Id} + \eps B, {\rm Id})}{\eps^2} =
\lim_{\eps \rightarrow 0} \frac{\sum_{i=1}^n \log^2 (1+ \eps \lambda_i) }{\eps} = \sum_{i=1}^n \lambda_i^2,
$$
and (i) follows from the fact that $\| B \|_A^2  = \sum_i \lambda_i^2$.
We now turn to the proof of (ii). Again, we may reduce matters to the case where $A = {\rm Id}$ by noting that
$$ \gamma_{A,B}(s) = A^{1/2} \gamma_{{\rm Id}, A^{-1/2} B A^{-1/2}}(s) A^{1/2} \qquad \qquad (0 \leq s \leq 1). $$
Abbreviate $\gamma(s) = \gamma_{A,B}(s) = \gamma_{{\rm Id}, B}(s)$. Since $\gamma(s) = B^s$ then $\dot{\gamma}(s) = B^s \log(B)$ and hence, for any $0 \leq s \leq 1$,
$$ \| \dot{\gamma}(s) \|_{\gamma(s)} = \left \| B^{-s/2} \left( B^s \log(B) \right) B^{-s/2} \right \|_{HS}  = \left \| \log(B) \right \|_{HS}  = dist ({\rm Id}, B). $$
From the definition (\ref{eq_519}) it follows that $Length (\gamma) = dist ({\rm Id},B)$, and (ii) is proven.
\end{proof}

The right-hand side of (\ref{eq_1931}) depends quadratically on $B$,
and therefore Lemma \ref{lem_454} tells us that our distance function $dist$ on $M_n^+(\RR)$ is induced by a Riemannian metric. We refer to this
Riemannian metric as the {\it standard Riemannian metric} on $M_n^+(\RR)$.
The next two lemmas describe certain Lipschitz functions on $M_n^+(\RR)$.

\begin{lemma} Fix $v \in \RR^n$ and set $f(A) = \log (A v \cdot v)$ for $A \in M_n^+(\RR)$.
Then $f$ is a $1$-Lipschitz function with respect to the standard Riemannian metric on $M_n^+(\RR)$.
\label{lem_2201}
\end{lemma}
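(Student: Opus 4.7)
The plan is to verify the Lipschitz property infinitesimally and then integrate along the length-minimizing curve provided by Lemma \ref{lem_454}(ii). Concretely, for a tangent vector $B$ (symmetric $n \times n$ matrix) at a point $A \in M_n^+(\RR)$, I will show that
$$ |df_A(B)| \leq \| B \|_A. $$
Once this is established, given any $A_0, A_1 \in M_n^+(\RR)$ I take $\gamma = \gamma_{A_0, A_1}$ as in Lemma \ref{lem_454}(ii) and estimate
$$ |f(A_1) - f(A_0)| = \left| \int_0^1 df_{\gamma(s)} \bigl( \dot{\gamma}(s) \bigr) \, ds \right| \leq \int_0^1 \| \dot{\gamma}(s) \|_{\gamma(s)} \, ds = Length(\gamma) = dist(A_0, A_1), $$
which is exactly the 1-Lipschitz bound.

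For the infinitesimal estimate, I differentiate directly: $df_A(B) = (Bv \cdot v)/(Av \cdot v)$. The natural move is to conjugate the problem to the identity by setting $C = A^{-1/2} B A^{-1/2}$ and $w = A^{1/2} v$. Then $Av \cdot v = |w|^2$ and $Bv \cdot v = A^{1/2} C A^{1/2} v \cdot v = Cw \cdot w$, so
$$ df_A(B) = \frac{C w \cdot w}{|w|^2}. $$
Meanwhile, by the very definition (and the conjugation-invariance of the trace),
$$ \| B \|_A^2 = Tr \bigl[ (A^{-1} B)^2 \bigr] = Tr[C^2] = \| C \|_{HS}^2. $$

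The remaining step is the observation that for any symmetric matrix $C$ and any nonzero $w \in \RR^n$, the Rayleigh-type quotient satisfies
$$ \frac{|C w \cdot w|}{|w|^2} \leq \max_i |\mu_i(C)| \leq \sqrt{\sum_i \mu_i(C)^2} = \| C \|_{HS}, $$
where $\mu_i(C)$ are the eigenvalues of $C$. Combining this with the two displays above gives $|df_A(B)| \leq \| B \|_A$, and the rest of the argument is the line integral bound already written down. There is no real obstacle here: the only substantive content is the change of variables $B \mapsto A^{-1/2} B A^{-1/2}$, which reduces the problem to the trivial inequality between the spectral and Hilbert-Schmidt norms of a symmetric matrix.
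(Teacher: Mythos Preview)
Your proof is correct and follows essentially the same approach as the paper: both compute the directional derivative $df_A(B) = (Bv\cdot v)/(Av\cdot v)$ and bound it by $\|B\|_A$. The only cosmetic difference is that the paper establishes the key inequality $(Bv\cdot v)/(Av\cdot v) \le \|A^{-1/2}BA^{-1/2}\|_{HS}$ by diagonalizing $A$ and applying Cauchy--Schwarz in coordinates, whereas you perform the change of variables $C=A^{-1/2}BA^{-1/2}$, $w=A^{1/2}v$ and invoke the Rayleigh-quotient/spectral-norm bound---these are equivalent maneuvers.
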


\begin{proof} The map $f$ is clearly smooth.
Fix $A \in M_n^+(\RR)$ and let us show that the norm of the Riemannian gradient of $f$ at the point $A$
is bounded by one.
For any symmetric $n \times n$ matrix $B$ we have
$$ \left. \frac{d}{dt} f(A + t B) \right|_{t=0} = \frac{B v \cdot v}{Av \cdot v}. $$
Thus, in order to prove the lemma, it suffices to show that
\begin{equation}
\frac{B v \cdot v}{Av \cdot v} \leq \| B \|_A = \| A^{-1/2} B A^{-1/2} \|_{HS}.
\label{eq_615}
\end{equation}
By switching to another orthonormal basis, if necessary, we may assume that $A$ is a diagonal matrix. Denote by $\lambda_1,\ldots,\lambda_n > 0$ the numbers on the diagonal of $A$.
Denote $B = (b_{ij})_{i,j=1,\ldots,n}$ and $v = (v_1,\ldots,v_n) \in \RR^n$. From the Cauchy-Schwartz inequality,
$$ \sum_{i,j=1}^n b_{ij} v_i v_j \leq \sqrt{ \sum_{i,j=1}^n \frac{b_{ij}^2}{\lambda_i \lambda_j} } \sqrt{ \sum_{i,j=1}^n \lambda_i \lambda_j v_i^2 v_j^2 }
= \sqrt{ \sum_{i,j=1}^n \frac{b_{ij}^2}{\lambda_i \lambda_j} } \left( \sum_{i=1}^n \lambda_i v_i^2 \right),
$$
which is equivalent to the desired inequality (\ref{eq_615}).
\end{proof}

\begin{lemma} For $A \in M_n^+(\RR)$ denote its eigenvalues by
$ \lambda_1(A) \geq \ldots \geq \lambda_n(A) > 0 $.
Consider the map $\Lambda: M_n^+(\RR) \rightarrow \RR^n$ defined via
\begin{equation}  \Lambda(A) = \left( \log(\lambda_1(A)), \ldots, \log(\lambda_n(A)) \right). \label{eq_1007} \end{equation}
Then $\Lambda$ is a $1$-Lipschitz map, with respect to the standard Riemannian metric on $M_n^+(\RR)$, and the standard
Euclidean metric on $\RR^n$. \label{lem_2200}
\end{lemma}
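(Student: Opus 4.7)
The plan is to reduce the Lipschitz claim to a pointwise bound on the Riemannian differential of $\Lambda$ at matrices with simple spectrum, and then to integrate this bound along the length-minimizing geodesic furnished by Lemma \ref{lem_454}(ii).

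First I would establish the infinitesimal estimate. Fix $A\in M_n^+(\RR)$ with simple eigenvalues $\lambda_1>\cdots>\lambda_n>0$ and orthonormal eigenvectors $v_1,\ldots,v_n$. Analytic perturbation theory shows that $\Lambda$ is smooth in a neighborhood of $A$, and the Hellmann--Feynman formula gives, for any symmetric $n\times n$ matrix $B$,
$$
\left.\frac{d}{dt}\right|_{t=0}\log\lambda_i(A+tB) \;=\; \frac{Bv_i\cdot v_i}{\lambda_i}.
$$
Writing $B=(b_{ij})$ in the basis $v_1,\ldots,v_n$ (which also diagonalizes $A$), I would obtain
$$
\sum_{i=1}^n\left(\left.\frac{d}{dt}\right|_{t=0}\log\lambda_i(A+tB)\right)^{2}
\;=\;\sum_{i=1}^n\frac{b_{ii}^{2}}{\lambda_i^{2}}
\;\le\;\sum_{i,j=1}^n\frac{b_{ij}^{2}}{\lambda_i\lambda_j}
\;=\;\|B\|_A^{2},
$$
where the final equality is precisely the diagonal-basis computation from the proof of Lemma \ref{lem_2201}. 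This is the desired pointwise $1$-Lipschitz estimate on the Jacobian of $\Lambda$.

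Next, given arbitrary $A,B\in M_n^+(\RR)$, I would consider the real-analytic geodesic $\gamma(s)=\gamma_{A,B}(s)$ from Lemma \ref{lem_454}(ii), which satisfies $Length(\gamma)=dist(A,B)$. By the Courant--Fischer min-max characterisation of eigenvalues, each coordinate $\log\lambda_i$ of $\Lambda$ is a min-max of the $1$-Lipschitz functions $f_v$ from Lemma \ref{lem_2201}, hence is itself $1$-Lipschitz on $M_n^+(\RR)$; therefore $s\mapsto\Lambda(\gamma(s))$ is absolutely continuous. Since $\gamma$ is real-analytic, its discriminant is either identically zero or vanishes only at finitely many $s\in[0,1]$; in the latter case the pointwise bound applies almost everywhere along the geodesic and
$$
|\Lambda(A)-\Lambda(B)|\;\le\;\int_0^1\left|\tfrac{d}{ds}\Lambda(\gamma(s))\right|ds
\;\le\;\int_0^1\|\dot\gamma(s)\|_{\gamma(s)}\,ds \;=\; dist(A,B).
$$

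The main obstacle is the degenerate case in which $\gamma(s)$ has a persistent multiple eigenvalue for every $s$, because then the Hellmann--Feynman derivative formula above does not apply directly. I would handle this by a perturbation argument: the locus of real symmetric matrices with a repeated eigenvalue has real codimension at least two, so an arbitrarily small perturbation of the pair $(A,B)$ places the corresponding geodesic in general position. Both sides of the claimed inequality are continuous in $(A,B)$, so the general statement follows by passing to the limit.
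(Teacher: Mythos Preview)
Your proof is correct and follows essentially the same strategy as the paper's: compute the directional derivative of $\Lambda$ at simple-spectrum matrices via the Hellmann--Feynman formula, bound it pointwise by $\|B\|_A$, and integrate along the geodesic of Lemma~\ref{lem_454}(ii), using real-analyticity of the discriminant to control the exceptional set. The only cosmetic difference is in handling the degenerate case: the paper first restricts both endpoints to the dense open set $\cF$ of simple-spectrum matrices (so the resultant along $\gamma$ is nonzero at $s=0$ and hence has only finitely many zeros) and then extends by continuity of $\Lambda$, whereas you perturb the pair $(A,B)$ and pass to the limit---your codimension-two remark is more than is needed, since density of $\cF$ already suffices.
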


\begin{proof} Let $\cF \subseteq M_n^+(\RR)$ be the collection of all positive-definite, symmetric matrices
with $n$ distinct eigenvalues. Then $\cF$ is an open, dense set.
The function $\Lambda$ is continuous,
since the eigenvalues vary continuously with the matrix. It therefore suffices
to prove that $$ |\Lambda(A_1) - \Lambda(A_2)| \leq dist(A_1, A_2) \qquad \qquad \text{for} \ A_1, A_2 \in \cF. $$
Fix $A_1, A_2 \in \cF$. Consider the curve $\gamma(s) = \gamma_{A_1,A_2}(s / dist(A_1,A_2))$
where $\gamma_{A_1,A_2}(s)$
is as in Lemma \ref{lem_454}. Then $\gamma$ is a length-minimizing curve between $A_1$ and $A_2$, parametrized
by Riemannian arclength. We claim that $\gamma(s) \in \cF$ for all but finitely many values
of $s$. Indeed, the resultant of $\gamma(s)$ is a real-analytic function of $s$ which is not identically zero, hence
its zeros are isolated. Since $\Lambda \circ \gamma$ is continuous, in order to prove the lemma
it suffices to show that
\begin{equation}
 \left| \frac{d \Lambda(\gamma(s))}{ds} \right| \leq 1 \label{eq_2145} \end{equation}
for all $s$ with $\gamma(s) \in \cF$. Let us fix $s_0$ with $\gamma(s_0) \in \cF$.
Denote $A = \gamma(s_0)$ and $B = \dot{\gamma}(s_0)$. Since $\gamma$ is parameterized  by arclength, then
\begin{equation} \| B \|_A = \| A^{-1/2} B A^{-1/2} \|_{HS} = 1. \label{eq_2153} \end{equation}
Let $v_1,\ldots,v_n \in \RR^n$ be the orthonormal basis of eigenvectors that corresponds
to the eigenvalues $\lambda_1(A),\ldots,\lambda_n(A)$ of the matrix $A$.
Then,
\begin{equation}  \left. \frac{d \lambda_i(\gamma(s))}{ds} \right|_{s=s_0} = B v_i \cdot v_i \qquad \qquad (i=1,\ldots,n). \label{eq_2127}
\end{equation}
The relation (\ref{eq_2127}) is standard, see, e.g. Reed and Simon \cite[Section XII.1]{RS}. Consequently,
\begin{equation} \left. \frac{d \Lambda(\gamma(s))}{ds} \right|_{s=s_0} = \left( \frac{B v_1 \cdot v_1}{\lambda_1(A)}, \ldots,
 \frac{B v_n \cdot v_n}{\lambda_n(A)} \right). \label{eq_2155} \end{equation}
However, by (\ref{eq_2153}),
\begin{equation} \sum_{i=1}^n \left( \frac{B v_i \cdot v_i}{\lambda_i(A)} \right)^2 = \sum_{i=1}^n \left( A^{-1/2} B A^{-1/2} v_i \cdot v_i \right)^2
\leq \| A^{-1/2} B A^{-1/2} \|_{HS}^2 = 1. \label{eq_2154_} \end{equation}
Now (\ref{eq_2145}) follows from (\ref{eq_2155}) and (\ref{eq_2154_}).
\end{proof}

\begin{corollary} Whenever $A$ and $B$ are positive-definite
$n \times n$ matrices,
$$ \sum_{i=1}^n \log^2 \frac{\lambda_i}{\mu_i} \leq \left \| \log \left(A^{-1/2} B A^{-1/2} \right) \right\|_{HS}^2
$$
where $\lambda_1 \geq \ldots \geq \lambda_n > 0$ are the eigenvalues of $A$, and
$\mu_1 \geq \ldots \geq \mu_n > 0$ are the eigenvalues of $B$.
\end{corollary}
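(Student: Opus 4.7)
The plan is to recognize that this corollary is essentially a restatement of Lemma \ref{lem_2200} after unwinding the relevant definitions, so no new work is required beyond a careful matching of quantities.

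First, I would observe that the left-hand side is precisely $|\Lambda(A) - \Lambda(B)|^2$, where $\Lambda: M_n^+(\RR) \rightarrow \RR^n$ is the map from (\ref{eq_1007}). Indeed, by definition of $\Lambda$, the $i$-th coordinate of $\Lambda(A) - \Lambda(B)$ is $\log \lambda_i - \log \mu_i = \log(\lambda_i/\mu_i)$, since eigenvalues are listed in the same (decreasing) order in both $\Lambda(A)$ and $\Lambda(B)$ and in the statement of the corollary.

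Second, I would note that the right-hand side is exactly $dist(A,B)^2$ in view of the definition (\ref{eq_1924}). Thus the claimed inequality is literally
$$ |\Lambda(A) - \Lambda(B)|^2 \leq dist(A,B)^2, $$
and this is precisely the statement that $\Lambda$ is $1$-Lipschitz with respect to the standard Riemannian metric on $M_n^+(\RR)$ and the Euclidean metric on $\RR^n$, which was established in Lemma \ref{lem_2200}. Taking square roots and applying that lemma completes the proof.

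There is no real obstacle here; the only thing to be careful about is the ordering convention for eigenvalues, which is consistent between (\ref{eq_1007}) and the statement of the corollary. It may be worth writing one sentence pointing out that the conclusion can be repackaged as the statement that $dist(A,B)$ dominates the Euclidean distance between the sorted log-spectra of $A$ and $B$, to emphasize that the corollary is the natural ``scalar'' consequence of the $1$-Lipschitz property.
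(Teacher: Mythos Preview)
Your proposal is correct and matches the paper's own treatment: the corollary is stated in the paper immediately after Lemma \ref{lem_2200} without proof, precisely because it is nothing more than the $1$-Lipschitz property of $\Lambda$ rewritten via the identifications $|\Lambda(A)-\Lambda(B)|^2=\sum_i\log^2(\lambda_i/\mu_i)$ and $dist(A,B)=\|\log(A^{-1/2}BA^{-1/2})\|_{HS}$. Your check of the ordering convention is the only point that needed attention, and you handled it correctly.
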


\section{Bakry-\'Emery $\Gamma_2$-calculus}
\label{sec_gamma_2}

 Let $\mu$ and $\nu$ be two absolutely-continuous, log-concave probability measures on $\RR^n$.
Assume that $d \mu = e^{-V(x)} dx$ and $d \nu = e^{-W(x)} dx$,
for certain smooth, convex functions $V, W: \RR^n \rightarrow \RR$. Let $\nabla \Phi$ be the Brenier map between $\mu$ and $\nu$.
Caffarelli's regularity theory states that $\Phi: \RR^n
\rightarrow \RR$ is a smooth, convex function. Therefore (\ref{eq_905}) implies that
the transport equation
\begin{equation} -V(x) = \log \det D^2 \Phi(x) - W(\nabla \Phi(x)) \label{eq_2154} \end{equation}
holds everywhere in $\RR^n$. In particular, the matrix $D^2 \Phi(x) = \left( \Phi_{ij}(x) \right)_{i,j=1,\ldots,n}$ is invertible and hence positive-definite for any $x \in \RR^n$.
The inverse matrix to $D^2 \Phi(x)$ is denoted by
$\left( D^2 \Phi(x) \right)^{-1} = \left( \Phi^{ij}(x) \right)_{i,j=1,\ldots,n}$.
We use the Einstein summation convention, thus an index that appears twice in an expression,
once as a subscript and once as a superscript, is being summed upon.
We also use abbreviations such as $\Phi^{i}_{jk} = \Phi^{i \ell} \Phi_{jk \ell}$
and $\Phi^{ij}_{k} = \Phi^{i \ell} \Phi^{j m} \Phi_{km \ell}$.
 Differentiating (\ref{eq_2154}), we obtain
\begin{equation}
 V_j(x) = - \Phi_{ji}^i(x) + \sum_{i=1}^n \Phi_{ij}(x) W_i(\nabla \Phi(x))  \qquad \qquad (j=1,\ldots,n, \ x \in \RR^n).
\label{eq_2207}
\end{equation}
Following \cite{kol}, we use the positive-definite matrices $D^2 \Phi(x)$ in order to induce a Riemannian metric on $\RR^n$,
and consider the weighted Riemannian manifold
$$ M = M_{\mu, \nu} = \left( \RR^n, D^2 \Phi, \mu \right). $$
See Grigor'yan \cite{gri} and Bakry, Gentil and Ledoux \cite{BGL} for background on weighted Riemannian manifolds
and the $\Gamma_2$-calculus. For a smooth function $u: \RR^n \rightarrow \RR$ we have $|\nabla_M u|_M^2 = \Phi^{ij} u_i u_j$
where $|\nabla_M u|_M^2$ stands for the square of the Riemannian norm of the Riemannian gradient of $u$.
The Dirichlet form associated with the weighted Riemannian manifold $M_{\mu, \nu}$ is defined, for smooth functions $u, v: \RR^n \rightarrow \RR$, via
$$ \Gamma(u, v) = \int_{\RR^n} \langle \nabla_M u, \nabla_M v \rangle_M \, d \mu = \int_{\RR^n} \left( \Phi^{ij} u_i v_j \right) d \mu $$
whenever the integral converges. The Laplacian associated
with the weighted Riemannian manifold $M_{\mu, \nu}$ is defined, for a smooth function $u: \RR^n \rightarrow \RR$, by
\begin{equation}  L u = \Phi^{ij} u_{ij} - \sum_{j=1}^n W_j(\nabla \Phi(x)) u_j
= \Phi^{ij} u_{ij} - \left( \Phi_i^{ij} + \Phi^{ij} V_i \right)  u_j, \label{eq_434} \end{equation}
where the last equality holds in view of (\ref{eq_2207}). Integrating by parts, we verify that
$$ -\int_{\RR^n} (Lu) v d \mu =  -\int_{\RR^n} \left(\Phi^{ij} u_{ij} - \left[ \Phi_i^{ij} + \Phi^{ij} V_i \right]  u_j \right) v e^{-V}  =
\int_{\RR^n} \left( \Phi^{ij} u_i v_j \right) d \mu = \Gamma(u,v) $$
for any smooth functions $u,v : \RR^n \rightarrow \RR$, one of whom is compactly-supported. The next step
is to consider the {\it Carr\'e du Champ} of $M_{\mu, \nu}$:  As in Bakry and \'Emery \cite{BE}, for a smooth function $u: K \rightarrow \RR$ we define
\begin{equation} \Gamma_2(u) = \frac{1}{2} L \left( |\nabla_M u|^2_M \right) - \langle \nabla_M u, \nabla_M (L u) \rangle_M = \frac{1}{2} L \left( \Phi^{ij} u_i u_j \right) - \Phi^{ij} (L u)_i u_j. \label{eq_1400} \end{equation}

\begin{lemma}
For any smooth function $u: \RR^n \rightarrow \RR$ we have the pointwise inequality
$$\Gamma_2(u) \geq \frac{1}{4} \Phi^{i k}_{\ell} \Phi^{j \ell}_k u_i u_{j}.
$$
\label{lem_1035}
\end{lemma}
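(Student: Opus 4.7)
My plan is to prove the inequality via the Bakry--\'Emery Bochner identity on the weighted Hessian manifold $M_{\mu,\nu} = (\RR^n, D^2\Phi, \mu)$, combined with the non-negativity of its Ricci-Bakry-\'Emery tensor from \cite{kol}. The first step is to expand the definition
$$
\Gamma_2(u) \;=\; \tfrac{1}{2} L\bigl(\Phi^{ij}u_i u_j\bigr) \;-\; \Phi^{ij}(Lu)_i u_j
$$
in coordinates, using formula (\ref{eq_434}) for $L$ and the identity $\partial_k \Phi^{ij} = -\Phi^{ij}_k$. A direct but lengthy expansion shows that the third-order derivatives of $u$ appearing in the two terms cancel, and the result rearranges into the Bochner identity
$$
\Gamma_2(u) \;=\; \|\nabla_M^2 u\|_M^2 \;+\; \mathrm{Ric}_M^V(\nabla_M u, \nabla_M u),
$$
where $\nabla_M^2 u$ is the Riemannian Hessian with respect to the metric $D^2\Phi$, whose Christoffel symbols are $\Gamma^k_{ij} = \tfrac12 \Phi^k_{ij}$, and $\mathrm{Ric}_M^V$ is the Bakry--\'Emery Ricci tensor.

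Next, I would substitute the twice-differentiated transport equation (\ref{eq_2154}), which gives $V_k = \partial_k \log\det D^2\Phi - W_j(\nabla\Phi)\Phi_{jk}$, into the explicit coordinate expression for $\mathrm{Ric}_M^V$. Following the computation of \cite{kol}, $\mathrm{Ric}_M^V(\nabla_M u, \nabla_M u)$ then decomposes as the sum of a $V''$-contribution $V''(v,v)$ (non-negative since $\mu$ is log-concave), a pulled-back $W''$-contribution of the form $W''(\nabla\Phi)(D^2\Phi \cdot v, D^2\Phi \cdot v)$ (non-negative since $\nu$ is log-concave), and a "third-derivative-of-$\Phi$" piece arising from the Ricci curvature of the Hessian metric itself.

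Finally, I would combine this with the direct expansion
$$
\|\nabla_M^2 u\|_M^2 \;=\; \Phi^{ia}\Phi^{jb}u_{ij}u_{ab} \;-\; \Phi^{ia}\Phi^{jb}\Phi^k_{ij}u_k u_{ab} \;+\; \tfrac{1}{4}\Phi^{ik}_\ell \Phi^{j\ell}_k u_i u_j
$$
coming from $(\nabla_M^2 u)_{ij} = u_{ij} - \tfrac12 \Phi^k_{ij} u_k$, and reorganize the total. The $\tfrac{1}{4}$-term is exactly the desired lower bound; the remaining pieces---after matching with the corresponding pieces in the decomposition of $\mathrm{Ric}_M^V$---should combine into a manifestly non-negative sum of squares. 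The main obstacle is this last identification: one must show that the sign-indefinite cross term $-\Phi^{ia}\Phi^{jb}\Phi^k_{ij}u_k u_{ab}$ is absorbed via a perfect-square completion against the $V''$-piece, the $W''$-piece, and the pure second-order term $\Phi^{ia}\Phi^{jb}u_{ij}u_{ab}$, so that nothing negative survives. This is where the log-concavity of both $\mu$ and $\nu$ is genuinely used, and where careful tracking of the index-raising conventions for $\Phi^{ij}_k$ is most needed.
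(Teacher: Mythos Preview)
Your plan is essentially the paper's Bochner-formula approach, but you are taking an unnecessarily circuitous route and have misidentified where the cross term is absorbed.

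The simplification you are missing is that $\|\nabla_M^2 u\|_M^2$ is a norm squared and hence non-negative as a whole; there is no need to expand it. Dropping it gives $\Gamma_2(u) \geq \mathrm{Ric}_M^V(\nabla_M u,\nabla_M u)$ outright. The ``third-derivative-of-$\Phi$'' piece of the Bakry--\'Emery Ricci tensor you mention is, when evaluated on $\nabla_M u$, \emph{already equal to} $\tfrac{1}{4}\Phi^{ik}_{\ell}\Phi^{j\ell}_{k}u_i u_j$; indeed the paper computes $(\mathrm{Ric}_M)_{i\ell}=\tfrac14\Phi_{ij}^{k}\Phi_{\ell k}^{j}+\tfrac12 V_{i\ell}+\tfrac12\sum_{j,k}\Phi_{ji}\Phi_{\ell k}(W_{jk}\circ\nabla\Phi)$. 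Since the $V''$ and $W''$ contributions are non-negative by log-concavity, the lemma follows immediately---no cross term ever appears.

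If you insist on your route---extracting the $\tfrac14$ from the Hessian-norm expansion---then your description of what absorbs the cross term $-\Phi^{ia}\Phi^{jb}\Phi^k_{ij}u_k u_{ab}$ is wrong. It is \emph{not} absorbed by the $V''$- and $W''$-pieces: those can vanish identically (take uniform measures on convex bodies) while the lemma still holds. The correct completion is purely algebraic and uses the third-derivative Ricci piece, which contributes a \emph{second} copy of $\tfrac14\Phi^{ik}_{\ell}\Phi^{j\ell}_{k}u_i u_j$. Concretely,
\[
\Phi^{k\ell}\Phi^{ij}u_{ik}u_{j\ell}\;-\;\Phi^{ijk}u_{ij}u_k\;+\;\tfrac14\Phi^{ik}_{\ell}\Phi^{j\ell}_{k}u_i u_j \;=\; \mathrm{Tr}(B^2)\;\geq\;0,
\]
where $B=(b^{j}_{i})$ with $b^{j}_{i}=\Phi^{jk}u_{ki}-\tfrac12\Phi^{jk}_{i}u_k$; non-negativity holds because $(D^2\Phi)B$ is symmetric, so $B$ is conjugate to a symmetric matrix. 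The $V''$ and $W''$ terms are then discarded as non-negative extras---that is the only place log-concavity enters.
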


Lemma \ref{lem_1035} is proven in \cite{K_moment} by introducing a K\"ahler structure and interpreting  the left-hand side of (\ref{eq_1042}) below
as the Hilbert-Schmidt norm of a certain Hessian operator restricted to a subspace. There are several additional ways
to prove Lemma \ref{lem_1035}. The brute-force way involves a tedious but straightforward computation which shows that
$$
 \Gamma_2(u) = \Phi^{kl} \Phi^{ij} u_{i k } u_{j \ell} - \Phi^{i j k} u_{ij} u_k +
 \frac{1}{2} \left( \Phi^{i k}_{\ell} \Phi^{j \ell}_k + \Phi^{i k} \Phi^{j \ell} V_{k \ell}   \right) u_i u_j
 + \frac{1}{2} \sum_{i,j=1}^n  (W_{ij} \circ \nabla \Phi) u_i u_j.
$$
This computation is more or less equivalent to reproving Bochner's formula.
Then, one proves the pointwise inequality
\begin{equation}
\Phi^{kl} \Phi^{ij} u_{i k } u_{j \ell} - \Phi^{i j k} u_{ij} u_k +  \frac{1}{4} \Phi^{i k}_{\ell} \Phi_{k}^{j \ell} u_i u_{j} \geq 0,
\label{eq_1042} \end{equation}
by representing the left-hand side of (\ref{eq_1042}) as the trace of the square of the matrix $B = (b_i^j)_{i,j=1,\ldots,n}$
where $b_i^j = \Phi^{j k} u_{k i} - \frac{1}{2} \Phi_i^{jk} u_k$. The product $A = (D^2 \Phi) B$ is a symmetric matrix,  hence
 $$ Tr \left( B^2 \right) =
 Tr \left[ \left( (D^2 \Phi)^{-1/2} A (D^2 \Phi)^{-1/2} \right)^2 \right] \geq 0. $$
Lemma \ref{lem_1035}
follows from (\ref{eq_1042}) and from the fact that $D^2 V$ and $D^2 W$ are positive semi-definite matrices.

\medskip  Another  approach to Lemma \ref{lem_1035} is to  use the notation of Riemannian geometry
as in \cite{kol}, and use
the Bochner formula. We first observe that   identity
(\ref{eq_2207}) in the case $j=1$ has the simple form
\begin{equation} \label{eq_440} L \Phi_1 = -V_1.
\end{equation}
Differentiating (\ref{eq_440}) and using $\partial_k (\Phi^{ij}) = -\Phi^{ij}_k$, we obtain
\begin{equation}
L(\Phi_{11}) - \Phi^{j k}_1 \Phi_{1jk} - \sum_{j, k=1}^n    \Phi_{j 1} \Phi_{1 k} \left( W_{j k} \circ \nabla \Phi \right) = -V_{11}.
\label{eq_445}
\end{equation}
The  Bochner-Lichnerowicz-Weitzenb\"ock formula
 states that for any smooth $u: \RR^n \rightarrow \RR$,
\begin{equation}
\Gamma_2(u)  = \| D_M^2 u \|_M^2  + Ric_M (\nabla_M u, \nabla_M u),
\label{eq_1051}
\end{equation}
where $ \| D^2_M u \|_M^2$ is the Hilbert-Schmidt norm of the Riemannian Hessian of $u$, and $Ric_M$ is the Bakry-\'Emery-Ricci tensor
of the weighted Riemmannian manifold $M = M_{\mu, \nu}$. Let us analyze the term in (\ref{eq_1051}) involving
the Hessian of $u$. The Christofell symbols of our Riemannian metric are $ \Gamma_{ij}^k = \frac{1}{2} \Phi_{ij}^k$, and therefore
$(D_M^2 u)_{ij} = u_{ij} - \frac{1}{2} \Phi_{ij}^k u_k$ and
$$ \| D_M^2 u \|_M^2 = \Phi^{ik} \Phi^{jm} \left( u_{ij} - \frac{1}{2} \Phi_{ij}^\ell u_\ell \right)
 \left( u_{mk} - \frac{1}{2} \Phi_{mk}^s u_s \right).
$$
In the particular case where $u = \Phi_1$, we obtain $( D_M^2 \Phi_1)_{jk} = \frac{1}{2} \Phi_{1jk}$ and hence
$\| D_M^2 \Phi_1 \|_M^2 = \frac{1}{4} \Phi_{1j}^k \Phi_{1k}^j$. Furthermore,
the vector field $\nabla_M \Phi_1$ satisfies $\nabla_M \Phi_1 = \partial / \partial x_1$ and $|\nabla_M \Phi_1|^2_M = \Phi_{11}$.
Since $L \Phi_1 = -V_1$, the Bochner formula (\ref{eq_1051}) for $u = \Phi_1$ takes the form
\begin{align} \nonumber
\frac{1}{2} L \left( \Phi_{11} \right) & = -\langle \nabla_M \Phi_1, \nabla_M V_1 \rangle_M + \frac{1}{4} \Phi_{1j}^k \Phi_{1k}^j  + Ric_M (\nabla_M u, \nabla_M u)
\\ & = -V_{11} + \frac{1}{4} \Phi_{1j}^k \Phi_{1k}^j + (Ric_M)_{11}.
\label{eq_1051_}
\end{align}
From (\ref{eq_445}) and (\ref{eq_1051_}) we obtain a formula for the Bakry-\'Emery-Ricci tensor:
$$(Ric_M)_{11} = \frac{1}{4} \Phi_{1j}^k \Phi_{1k}^j + \frac{1}{2} V_{11} + \frac{1}{2} \sum_{j, k=1}^n    \Phi_{j 1} \Phi_{1 k} \left( W_{j k} \circ \nabla \Phi \right). $$
It is clear that there is nothing special about the derivative $u = \Phi_1$, and that we could have repeated the argument
with $u = \nabla \Phi \cdot \theta$ for any $\theta \in \RR^n$. We thus obtain the formula
\begin{equation}
(Ric_M)_{i \ell} = \frac{1}{4} \Phi_{i j}^k \Phi_{\ell k}^j + \frac{1}{2} V_{i \ell} + \frac{1}{2} \sum_{j, k=1}^n    \Phi_{j i} \Phi_{\ell k} \left( W_{j k} \circ \nabla \Phi \right).
\label{eq_458} \end{equation}
Since $D^2 V$ and $D^2 W$ are positive semi-definite, then for any smooth $u: \RR^n \rightarrow \RR$,
$$ \Gamma_2(u) \geq Ric_M (\nabla_M u, \nabla_M u) \geq \frac{1}{4} \Phi_{j}^{ik} \Phi_{k}^{j \ell} u_i u_{\ell} $$
and the third proof of Lemma \ref{lem_1035} is complete.

\medskip Having finished with Lemma \ref{lem_1035}, let us introduce one of the main ideas in
this paper, which was absent from \cite{K_moment}. The idea is to consider the map
\begin{equation}
 \RR^n \ni x \mapsto D^2 \Phi(x) \in M_n^+(\RR).
 \label{eq_1318}
 \end{equation}
Denote by $(g_{ij}(x))_{i,j=1,\ldots,n}$ the pull-back of the standard
Riemannian metric on $M_n^+(\RR)$ via  the map (\ref{eq_1318}).
It follows from  Lemma \ref{lem_454} that $g_{ij}$ is given by the formula
\begin{equation}
 g_{ij} = Tr \left[ (D^2 \Phi)^{-1} \cdot \partial_i \left( D^2 \Phi \right)
\cdot (D^2 \Phi)^{-1} \cdot \partial_j \left( D^2 \Phi \right) \right] = \Phi_{ik}^{\ell} \Phi_{j\ell}^k.
\label{eq_212} \end{equation}
Note that the positive semi-definite matrix $(g_{ij}(x))_{i,j=1,\ldots,n}$ is not necessarily invertible,
and it could happen that distinct points of $\RR^n$ have zero Riemannian distance with respect to the Riemannian metric $(g_{ij})$.
The metric $g_{ij}$ resembles an expression appearing in Lemma \ref{lem_1035}, a fact that will be exploited in the next section.

\section{Dualizing the Bochner inequality}
\label{sec_dual}

It is by now well-known that in the presence of convexity assumptions,
Poincar\'e-type inequalities may be deduced from Bochner's formula via a dualization procedure.
In this section we investigate the Poincar\'e inequality that is dual to Lemma \ref{lem_1035}.
This Poincar\'e inequality was also obtained in \cite{K_moment}, but in a cumbersome formulation and under an undesired assumption
called ``regularity at infinite'', which we eliminate here.

\medskip We begin with an easy case. Throughout this section we assume, in addition to the smoothness assumptions
made at the beginning of Section \ref{sec_gamma_2}, that there exists $\eps_0 > 0$ for which
\begin{equation}  D^2 \Phi(x) \geq \eps_0 \cdot {\rm Id} \qquad \qquad \qquad (x \in \RR^n) \label{eq_952} \end{equation}
in the sense of symmetric matrices.
Write $C_c^{\infty}(\RR^n)$ for the space of all compactly-supported, smooth functions on $\RR^n$.
The following lemma  is a variant of a well-known fact (see, e.g., Strichartz \cite{S}), that
compactly-supported functions are dense in Sobolev spaces when the Riemannian manifold is complete. Our assumption (\ref{eq_952})
implies the completeness of the Riemannian manifold $M = M_{\mu, \nu}$.

\begin{lemma} Let $f \in L^2(\mu)$ satisfy $\int f d \mu = 0$. Then there exists a sequence $u_k \in C_c^{\infty}(\RR^n)$ with
$$ \| Lu_k - f \|_{L^2(\mu)} \stackrel{k \rightarrow \infty}{\longrightarrow} 0. $$ \label{lem_1043}
\end{lemma}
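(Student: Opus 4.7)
The plan is to argue by Hilbert-space duality in $L^2(\mu)$. Let $\cR$ denote the closure of the linear subspace $L(C_c^\infty(\RR^n)) \subseteq L^2(\mu)$. Integration by parts gives
$$ \int_{\RR^n} Lu \, d\mu = -\Gamma(u, 1) = 0 \qquad \text{for every } u \in C_c^\infty(\RR^n), $$
so $\cR \subseteq H$, where $H = \{ h \in L^2(\mu) : \int h \, d\mu = 0 \}$. The lemma is the assertion that this inclusion is an equality, so it suffices to show that every $g \in L^2(\mu)$ with $g \neq 0$, $\int g \, d\mu = 0$, and $\int g \cdot Lu \, d\mu = 0$ for every $u \in C_c^\infty(\RR^n)$ must in fact vanish.

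\emph{Step 1 (elliptic regularity).} The operator $L = \Phi^{ij}\partial_i\partial_j - b^j\partial_j$ from (\ref{eq_434}) has smooth coefficients, and the lower bound (\ref{eq_952}) together with smoothness of $\Phi$ makes $L$ locally uniformly elliptic. Writing the orthogonality condition as an integral against $u \cdot e^{-V}$, we see that $g$ is a distributional solution of a uniformly elliptic, smooth-coefficient PDE in divergence form. Standard interior regularity then gives $g \in C^\infty(\RR^n)$, and the symmetry of $L$ with respect to $\mu$ upgrades the condition to $Lg = 0$ pointwise on $\RR^n$.

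\emph{Step 2 (Yau-type Liouville via completeness).} The lower bound (\ref{eq_952}) gives $d_M(x,y) \geq \sqrt{\eps_0}\, |x-y|$, so every $M$-ball is bounded in Euclidean norm, hence relatively compact in $\RR^n$, and consequently $M = M_{\mu,\nu}$ is complete. Fix $x_0 \in \RR^n$ and a function $\chi \in C^\infty([0,\infty))$ with $\chi \equiv 1$ on $[0,1]$, $\chi \equiv 0$ on $[2,\infty)$, and $|\chi'| \leq 2$; set $\chi_R(x) = \chi(d_M(x_0, x)/R)$. Then $\chi_R$ is Lipschitz with compact support in $\RR^n$ and $|\nabla_M \chi_R|_M \leq 2/R$ a.e. Since $g$ is smooth with $Lg = 0$, integrating $0 = \int \chi_R^2 g \cdot Lg \, d\mu$ by parts yields
$$ \int \chi_R^2 |\nabla_M g|_M^2 \, d\mu = -2 \int \chi_R \, g \, \langle \nabla_M \chi_R, \nabla_M g \rangle_M \, d\mu, $$
and Cauchy-Schwarz then gives
$$ \int \chi_R^2 |\nabla_M g|_M^2 \, d\mu \leq \frac{16}{R^2} \int g^2 \, d\mu. $$
Letting $R \to \infty$ and applying Fatou's lemma, we deduce $\nabla_M g \equiv 0$, so $g$ is constant on the connected manifold $M$. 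Combined with $\int g \, d\mu = 0$, this forces $g \equiv 0$, a contradiction.

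\emph{Main difficulty.} The Yau cutoff step is routine once completeness has been extracted from (\ref{eq_952}); the more delicate point is the regularity step, where one must verify that the orthogonality condition $\int g \cdot Lu \, d\mu = 0$ really does place $g$ in the kernel of the formal adjoint of a locally uniformly elliptic smooth-coefficient operator, so that classical interior regularity applies. Note that (\ref{eq_952}) is essentially used twice: both to obtain ellipticity and to extract completeness.
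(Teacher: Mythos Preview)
Your proof is correct and follows essentially the same strategy as the paper: reduce to showing the orthogonal complement of $L(C_c^\infty)$ is trivial, invoke elliptic regularity to make the orthogonal function $g$ smooth with $Lg=0$, and then run a cutoff argument to force $\nabla_M g\equiv 0$. The only cosmetic differences are that the paper uses \emph{Euclidean} cutoffs $\eta_R$ together with the bound $|\nabla_M\eta|^2\le\eps_0^{-1}|\nabla\eta|^2$ coming from (\ref{eq_952}), whereas you use \emph{Riemannian} cutoffs built from $d_M$ and use (\ref{eq_952}) instead to guarantee completeness (hence compact support of $\chi_R$); and the paper organizes the integration by parts via the identity $L(f^2)=2|\nabla_M f|^2$ rather than your direct Cauchy--Schwarz step. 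Both variants are standard and equivalent in strength.
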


\begin{proof} Recall that $\int (Lu) d \mu = 0$ for all $u \in C_c^{\infty}(\RR^n)$.
In order to show that the linear space $\{ L u \, ; \, u \in C_c^{\infty}(\RR^n) \}$ is dense,
we analyze its orthogonal complement. Let $f \in L^2(\mu)$ be in the orthogonal complement, i.e., for any $u \in C_c^{\infty}(\RR^n)$,
\begin{equation}  \int_{\RR^n} f (Lu) d \mu = 0. \label{eq_955}
\end{equation}
Our goal is to show that $f \equiv Const$. Note that (\ref{eq_955}) means that $f$ is a weak solution of $L f \equiv 0$.
Since $L$ is elliptic, then $f$ is smooth and $L f \equiv 0$ in the classical sense. Thus,
$$ L (f^2) = 2 f L f + 2 |\nabla_{M} f|^2  = 2 |\nabla_{M} f|^2. $$
Therefore, for any $\eta \in C_c^{\infty}(\RR^n)$,
\begin{align*}
 \int_{\RR^n} |\nabla_M( \eta f) |^2  d \mu & =
 \int_{\RR^n} \left[ \eta^2 |\nabla_M f |^2  + \frac{1}{2} \nabla_M (f^2) \cdot \nabla_M (\eta^2)  + f^2 |\nabla_M \eta|^2  \right] d \mu \\ & =
  \int_{\RR^n} \left[ \eta^2 |\nabla_M f |^2  - \frac{1}{2} \eta^2 L(f^2)   + f^2 |\nabla_M \eta|^2  \right] d \mu =   \int_{\RR^n} |\nabla_M \eta|^2 f^2   d \mu.
 \end{align*}
However, according to our assumption (\ref{eq_952}), we have $ |\nabla_M \eta|^2 = \Phi^{ij} \eta_i \eta_j \leq \eps_0^{-1} |\nabla \eta|^2. $
Let $\eta_R$ be a smooth cutoff function in $\RR^n$ that equals one on a Euclidean ball of radius $R$ centered at the origin, equals zero outside a Euclidean ball of radius $2R$,
and satisfies $|\nabla \eta_R| \leq 2/R$ throughout $\RR^n$. Then,
$$  \int_{K} |\nabla_M( \eta_R f) |^2 d \mu \leq \int_{\RR^n} |\nabla_M \eta|^2 f^2   d \mu \leq \eps_0^{-1} \int_{\RR^n}  |\nabla \eta_R|^2 f^2 d \mu \leq
\frac{2}{R \eps_0} \int_{\RR^n} f^2 d \mu \stackrel{R \rightarrow \infty}{\longrightarrow} 0, $$
since $f \in L^2(\mu)$. Therefore $\nabla f \equiv 0$ and $f$ is constant.
\end{proof}

Suppose that $F$ is a locally-Lipschitz function on a Riemannian manifold such as
$M_n^+(\RR)$.
By the Rademacher theorem, the gradient $\nabla F$ is well-defined almost everywhere
with respect to the Riemannian volume measure. In order to have a function $|\nabla F|$ that is defined everywhere,
in this note we set
\begin{equation}  |\nabla F|(x) = \limsup_{y\rightarrow x \atop{z\rightarrow x} } \frac{|f(y) - f(z)|}{dist(y,z)} = \lim_{\eps \rightarrow 0^+} \sup_{y,z \in B(x, \eps) \atop{y \neq z} }\frac{|f(y) - f(z)|}{dist(y,z)}
\label{eq_232} \end{equation}
where $dist$ is the Riemannian distance, and $B(x,\eps) = \{ y \, ; \, dist(x,y) < \eps \}$. Since $F$ is locally-Lipschitz, then the function $|\nabla F|$ is locally-bounded and upper semi-continuous.
Clearly, at any point $x$ where $F$ is continuously differentiable, $|\nabla F|(x)$ equals
the Riemannian length of $\nabla F(x)$.

\begin{proposition} Denote by $\theta$ the push-forward of the measure $\mu$ under the map (\ref{eq_1318}).
Then for any locally-Lipschitz function $F: M_n^+(\RR) \rightarrow \RR$ that belongs to $L^2(\theta)$ with $\int_{M_n^+(\RR)} F d \theta = 0$,
$$ \int_{M_n^+(\RR)} F^2 d \theta  \, \leq \, 4 \int_{M_n^+(\RR)} |\nabla F|^2 d \theta,
$$
whenever the right-hand side is finite.
\label{prop_poincare}
\end{proposition}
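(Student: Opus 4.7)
The plan is to pull $F$ back to $\RR^n$ via $\psi(x) := D^2\Phi(x)$ and then dualize the Bochner-type inequality of Lemma~\ref{lem_1035} using the $L^2(\mu)$-density of $\{Lv : v \in C_c^\infty(\RR^n)\}$ in the mean-zero subspace of $L^2(\mu)$ supplied by Lemma~\ref{lem_1043}. Set $u := F \circ \psi$, so that $\int u\, d\mu = \int F\, d\theta = 0$ and $\int u^2\, d\mu = \int F^2\, d\theta$. I first treat the case of smooth $F$ and recover the general locally-Lipschitz case at the end by a standard mollification argument (mollifying $F$ in a local chart on the linear space of symmetric $n \times n$ matrices, using that the Euclidean and Riemannian gradients on $M_n^+(\RR)$ are locally equivalent and that the function $|\nabla F|$ from (\ref{eq_232}) is upper semi-continuous).

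Assume $F$ is smooth, hence so is $u$. For any $v \in C_c^\infty(\RR^n)$, integration by parts gives
\[
\int u (Lv)\, d\mu \;=\; -\int \Phi^{ij} u_i v_j \, d\mu.
\]
The chain rule applied to $u = F \circ \psi$ yields $u_i(x) = dF_{\psi(x)}(\partial_i \psi(x))$, so $\Phi^{ij} u_i v_j = dF_{\psi(x)}\bigl( \Phi^{ij} v_j\, \partial_i \psi(x)\bigr)$, where the argument is a tangent vector at $\psi(x) \in M_n^+(\RR)$. Its squared Riemannian norm equals $\Phi^{ij}\Phi^{k\ell}\, g_{ik}\, v_j v_\ell$ with $g_{ij}$ the pullback metric~(\ref{eq_212}). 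A short index computation using the total symmetry of $\Phi_{ijk}$ and $\partial_k \Phi^{ij} = -\Phi^{ij}_k$ identifies this quadratic form with $\Phi^{ik}_m \Phi^{jm}_k\, v_i v_j$, exactly the tensor on the right-hand side of Lemma~\ref{lem_1035}. The pointwise Cauchy--Schwarz inequality on $M_n^+(\RR)$ therefore gives, at every $x$,
\[
|\Phi^{ij} u_i v_j|(x) \;\leq\; |\nabla F|(\psi(x))\, \sqrt{\Phi^{ik}_m \Phi^{jm}_k\, v_i v_j}.
\]

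Combining with the Cauchy--Schwarz inequality in $L^2(\mu)$ and the integrated Bochner bound---Lemma~\ref{lem_1035} together with the standard identity $\int \Gamma_2(v)\, d\mu = \int (Lv)^2\, d\mu$ for $v \in C_c^\infty(\RR^n)$---one obtains
\[
\Bigl|\int u(Lv)\, d\mu \Bigr| \;\leq\; 2 \Bigl(\int |\nabla F|^2\, d\theta \Bigr)^{1/2} \Bigl(\int (Lv)^2\, d\mu \Bigr)^{1/2}.
\]
Since $u \in L^2(\mu)$ with zero mean, Lemma~\ref{lem_1043} (whose hypothesis is supplied by (\ref{eq_952})) furnishes $v_k \in C_c^\infty(\RR^n)$ with $Lv_k \to u$ in $L^2(\mu)$. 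Passing to the limit $k \to \infty$ and dividing by $\bigl(\int u^2 d\mu\bigr)^{1/2}$ yields $\int F^2\, d\theta = \int u^2\, d\mu \leq 4 \int |\nabla F|^2\, d\theta$, as claimed.

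The principal obstacle I anticipate is the tensor identification in the second paragraph: checking that the quadratic form coming from the Riemannian norm on $M_n^+(\RR)$ matches the expression on the right-hand side of Lemma~\ref{lem_1035}. This is a short but careful calculation that relies on the total symmetry of $\Phi_{ijk}$ and on raising and lowering indices by $\Phi^{ij}$; it is precisely what links the seemingly disparate formulas (\ref{eq_212}) and Lemma~\ref{lem_1035} and allows the Bochner inequality to dualize into a Poincar\'e inequality on $M_n^+(\RR)$ rather than on the weighted manifold $M_{\mu,\nu}$.
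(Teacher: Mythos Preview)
Your proposal is correct and follows essentially the same route as the paper: pull back to $\RR^n$, use the pointwise Cauchy--Schwarz estimate coming from the tensor identification $\Phi^{ij}\Phi^{k\ell}g_{ik}=\Phi^{i\alpha}_\beta\Phi^{j\beta}_\alpha$ (which is exactly (\ref{eq_212}) combined with Lemma~\ref{lem_1035}), integrate against $Lv$ with $v\in C_c^\infty$, bound via $\int(Lv)^2\,d\mu=\int\Gamma_2(v)\,d\mu$, and finish with Lemma~\ref{lem_1043}. The only difference is that the paper handles locally-Lipschitz $F$ directly through the supremum formulation built into definition~(\ref{eq_232}) (see (\ref{eq_1108})--(\ref{eq_1004})), so no mollification step is needed.
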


\begin{proof}
Since $F$ is locally-Lipschitz in $L^2(\theta)$, then the function $f$ defined via
$$  f(x) = F\left( D^2 \Phi(x) \right)
\qquad \qquad \qquad (x \in \RR^n), $$
is locally-Lipschitz in $\RR^n$ and belongs to $L^2(\mu)$.
Abbreviate $H = |\nabla F|^2$ and $h(x) = H \left(  D^2 \Phi (x) \right) $.
From the definition (\ref{eq_232}) of $|\nabla F|$, for any $x \in \RR^n$ in which $f$ is differentiable,
\begin{equation}
h(x) \geq  \sup \left \{
\sum_{i=1}^n V^i f_i \, ; \, \sum_{i,j=1}^n  g_{ij} V^i V^j \leq 1, \, V^1,\ldots,V^n \in \RR \right \},
\label{eq_1108}
\end{equation}
where $f_i$ and $g_{ij}$ are evaluated at the point $x$. In the case where the matrix $(g_{ij}(x))_{i,j=1,\ldots,n}$ is invertible,
we may express the supremum in (\ref{eq_1108}) in terms of the inverse matrix, yet it is the formula (\ref{eq_1108}) which is valid
in the general case. Setting $U_i = \Phi_{ij} V^j$, we reformulate (\ref{eq_1108}) as
\begin{equation}  h(x) \geq \sup \left \{
 \Phi^{ij} U_j f_i \, ; \,   g_{ij} \Phi^{k i} \Phi^{\ell j} U_k U_\ell \leq 1, \, U_1,\ldots,U_n \in \RR \right \}.
\label{eq_1004} \end{equation}
The formula (\ref{eq_1004}) is valid for almost any $x \in \RR^n$, since $f$ is differentiable almost everywhere in $\RR^n$ by the Rademacher theorem.
We would like to show that for any $u \in C_c^{\infty}(\RR^n)$,
\begin{equation}
-\int_{\RR^n} f (Lu) d \mu \leq 2 \sqrt{\int_{\RR^n} h^2 d \mu} \cdot \sqrt{\int_{\RR^n} (L u)^2 d \mu}. \label{eq_351}
\end{equation}
To that end, we observe that since $u$ is compactly-supported,
$$ \int_{\RR^n} \Gamma_2(u) d \mu = \frac{1}{2} \int_{\RR^n} L \left( \Phi^{ij} u_i u_j \right) d \mu - \int_{\RR^n} \Phi^{ij} (L u)_i u_j d \mu
= -\int_{\RR^n} \Phi^{ij} (L u)_i u_j d \mu = \int_{\RR^n} (L u)^2 d \mu. $$
Therefore Lemma \ref{lem_1035} and (\ref{eq_212}) imply that for any $u \in C_c^{\infty}(\RR^n)$,
$$ \int_{\RR^n} (L u)^2 d \mu \geq \frac{1}{4} \int_{\RR^n} \Phi^{i k} \Phi^{j \ell} g_{k \ell} u_i u_{j} d \mu. $$
Since $f$ is locally-Lipschitz,  we may safely integrate by parts and obtain that for any $u \in C_c^{\infty}(\RR^n)$,
\begin{align*}
 -\int_{\RR^n} & f (Lu) d \mu  = \int_{\RR^n} \Phi^{ij} f_i u_j d \mu \leq \int_{\RR^n} h(x) \sqrt{ g_{ij} \Phi^{k i} \Phi^{\ell j} u_k u_\ell  } d \mu(x) \\
 & \leq \sqrt{ \int_{\RR^n} h^2 d \mu } \sqrt{ \int_{\RR^n}  g_{ij} \Phi^{k i} \Phi^{\ell j} u_k u_\ell  \, d \mu  }
 \leq 2 \sqrt{ \int_{\RR^n} h^2 d \mu } \sqrt{ \int_{\RR^n} (Lu)^2 d \mu}
\end{align*}
and (\ref{eq_351}) is proven. Since $\int_{M_n^+(\RR)} F d \theta = 0$ then also $\int_{\RR^n} f d \mu = 0$. From Lemma
\ref{lem_1043} there exists a sequence $u_k \in C_c^{\infty}(\RR^n)$ with $L u_k \rightarrow -f$ in $L^2(\mu)$.
We substitute $u = u_k$ in  (\ref{eq_351}), and take the limit $k \rightarrow \infty$. This yields
$$ \int_{\RR^n} f^2 d \mu \leq 2 \sqrt{\int_{\RR^n} h^2 d \mu} \cdot \sqrt{\int_{\RR^n} f^2 d \mu}. $$
Hence,
$$ \int_{\RR^n} f^2 d \mu \leq 4 \int_K h^2 d \mu. $$
Since $h(x) = H(D^2 \Phi)$ with $H = |\nabla F|^2$,  the proposition is proven.
\end{proof}

\section{Regularity issues}
\label{sec_regularity}

This section explains how to eliminate  assumption (\ref{eq_952}) and also the smoothness assumptions of
the previous two sections.

\begin{theorem} Assume that $\mu$ and $\nu$
are  absolutely-continuous, log-concave probability measures on $\RR^n$.
Let $\nabla \Phi$
be the Brenier map between $\mu$ and $\nu$, and assume condition ($\star$)
from Section \ref{sec_intro}. Denote by $\theta$ the push-forward of the measure $\mu$ under
the map $x \mapsto D^2 \Phi(x)$.

\medskip
Then for any $\theta$-integrable, locally-Lipschitz function $F: M_n^+(\RR) \rightarrow \RR$,
\begin{equation} \int_{M_n^+(\RR)} F^2 d \theta  \, -  \, \left( \int_{M_n^+(\RR)} F d \theta \right)^2 \, \leq \, 4 \int_{M_n^+(\RR)} |\nabla F|^2 d \theta,
\label{eq_606}
\end{equation}
whenever the right-hand side is finite, and $|\nabla F|$ is interpreted as in (\ref{eq_232}).  \label{thm_539}
\end{theorem}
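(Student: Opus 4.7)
The plan is a regularization procedure: approximate $\mu$ and $\nu$ by log-concave measures $\mu_k, \nu_k$ satisfying both the smoothness hypotheses of Section \ref{sec_gamma_2} and condition (\ref{eq_952}), apply Proposition \ref{prop_poincare} to each pair $(\mu_k, \nu_k)$, and pass to the limit $k \to \infty$. A natural choice of approximation is to let $\mu_k$ have density proportional to $(\rho_\mu * \eta_{1/k})(x) \cdot e^{-|x|^2/(2k)}$, with $\eta_{1/k}$ a Gaussian mollifier, and similarly for $\nu_k$. Each $\mu_k, \nu_k$ is absolutely continuous and log-concave (a product of log-concave factors), with smooth strongly convex potentials $V_k, W_k$ satisfying $D^2 V_k, D^2 W_k \geq k^{-1} {\rm Id}$, and $\mu_k \rightharpoonup \mu$, $\nu_k \rightharpoonup \nu$ weakly.

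For each $k$, Caffarelli's regularity theory gives that the Brenier potential $\Phi_k$ for $(\mu_k,\nu_k)$ is $C^\infty$-smooth. Recall that condition (\ref{eq_952}) is used in the proof of Lemma \ref{lem_1043} solely to guarantee completeness of the weighted Riemannian manifold $M_{\mu_k,\nu_k}$. I would verify this via an adapted cutoff argument: since $\mu_k$ is strongly log-concave and hence has a Gaussian-type tail, one can construct Riemannian cutoffs $\eta_R$ with $\int |\nabla_M \eta_R|^2_M f^2 \, d\mu_k \to 0$ as $R \to \infty$, allowing the density argument in Lemma \ref{lem_1043} to go through without the uniform bound (\ref{eq_952}). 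Alternatively, one can directly produce $\eps_k > 0$ with $D^2 \Phi_k \geq \eps_k \cdot {\rm Id}$ via a contraction estimate exploiting $D^2 V_k \geq k^{-1} {\rm Id}$ together with an upper bound $D^2 W_k \leq K_k \cdot {\rm Id}$ on the relevant region; the dependence of $\eps_k$ on $k$ is harmless since we only need the proposition for each fixed $k$.

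Having established the Poincar\'e inequality $\int F^2 d\theta_k - (\int F d\theta_k)^2 \leq 4 \int |\nabla F|^2 d\theta_k$ for $\theta_k = (D^2 \Phi_k)_* \mu_k$, the final step is to pass to the limit. Stability of Brenier maps under weak convergence of the marginals, combined with Caffarelli's interior regularity on $Supp(\mu)$, gives $\Phi_k \to \Phi$ in $C^2_{\rm loc}(Supp(\mu))$, hence $\theta_k \rightharpoonup \theta$ weakly on $M_n^+(\RR)$. For a given locally-Lipschitz $F$ satisfying the hypotheses of the theorem, I would first truncate to $F_N = \max(-N, \min(F, N))$, which is bounded Lipschitz and hence continuous, so the variance side passes through the limit via weak convergence. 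For the gradient side, the definition (\ref{eq_232}) makes $|\nabla F_N|^2$ upper semicontinuous, so the Portmanteau theorem gives $\limsup_k \int |\nabla F_N|^2 d\theta_k \leq \int |\nabla F_N|^2 d\theta$. Sending $k \to \infty$ first and then $N \to \infty$ via monotone convergence yields (\ref{eq_606}).

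The main obstacle is engineering the approximation so that (i) the Brenier potentials $\Phi_k$ satisfy the hypotheses needed by Proposition \ref{prop_poincare}, and (ii) the convergence $\theta_k \rightharpoonup \theta$ is robust enough to carry the Poincar\'e inequality through. Both issues essentially come down to relaxing (\ref{eq_952}): the cleanest route is to reprove Lemma \ref{lem_1043} under the weaker assumption that $M_{\mu,\nu}$ is geodesically complete, which should hold under mere log-concavity of $\mu,\nu$ by leveraging the non-negativity of the Bakry-\'Emery-Ricci tensor established in \cite{kol} together with Riemannian cutoffs, rather than requiring a uniform spectral bound on $D^2\Phi$.
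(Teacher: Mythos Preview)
Your overall strategy coincides with the paper's: approximate by smooth strongly log-concave pairs, invoke a Caffarelli-type contraction estimate to secure (\ref{eq_952}) for each approximant, apply Proposition~\ref{prop_poincare}, and pass to the limit. The paper takes exactly the contraction route you mention parenthetically (via the refinement in \cite{kol1}, yielding $D^2\Phi_N \ge N^{-2}\,{\rm Id}$ from $D^2V_N \ge N^{-1}\,{\rm Id}$ and $D^2W_N \le N\,{\rm Id}$), not the geodesic-completeness route.

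The genuine gap is the sentence ``Stability of Brenier maps under weak convergence of the marginals, combined with Caffarelli's interior regularity on $Supp(\mu)$, gives $\Phi_k \to \Phi$ in $C^2_{\rm loc}(Supp(\mu))$.'' Standard stability yields only $\nabla\Phi_k \to \nabla\Phi$ locally uniformly; upgrading to convergence of Hessians requires \emph{uniform} $C^{2,\alpha}$ estimates on the $\Phi_k$, and Caffarelli's qualitative interior regularity does not supply these. The paper isolates this as Lemma~\ref{lem_538} and proves it by a bootstrap: first uniform $C^{1,\alpha}$ bounds via the constructive estimates of Gutierrez--Huang \cite{GH} and Forzani--Maldonado \cite{FM,FM2} (which need uniform control on the moduli of convexity of $\Phi_k$ and $\Phi_k^*$), then uniform H\"older control on $\det D^2\Phi_k$ via the transport equation, and finally uniform $C^{2,\alpha}$ bounds from Trudinger--Wang \cite{TW} followed by Arzel\`a--Ascoli. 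This is the technical heart of Section~\ref{sec_regularity}, and your proposal treats it as a black box.

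A smaller issue: in your limit step, $F_N=\max(-N,\min(F,N))$ is bounded, but $|\nabla F_N|^2 \le |\nabla F|^2$ need not be bounded above, so Portmanteau for upper semicontinuous integrands does not directly apply. The paper handles this by first restricting to compactly supported $F$ (where both $F$ and $|\nabla F|$ are bounded, so Corollary~\ref{cor_539} applies), then extending via spatial cutoffs $\theta_R$ on $M_n^+(\RR)$ to reach $F\in L^2(\theta)$, and only then using the min/max truncation to drop the $L^2$ hypothesis.
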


The strategy for proving Theorem \ref{thm_539} is to approximate
$\Phi$ by a sequence of functions $\Phi_N$ that satisfy assumption (\ref{eq_952}),
and to prove the pointwise (even local uniform) convergence
$D^2 \Phi_{N}(x) \stackrel{N \rightarrow \infty} \longrightarrow D^2 \Phi(x)$.
Below we discuss  two possible justifications of this convergence, as we believe that both of them may be useful.
The first proof occupies Subsection \ref{general_case}, and is based on various results from the regularity theory
of the Monge-Amp\`ere equation. The log-concavity of the measures is not really required for the first proof, and it suffices to assume that the densities are locally
H\"older.

\medskip The second proof in Subsection \ref{lc_target} is in fact
an alternative approach to  Caffareli's $C^{1,\alpha}$-regularity results in the log-concave case.
The argument in Subsection \ref{lc_target} is more self-contained, and it is based on integration-by-parts arguments.
The log-concavity of the target measure plays an important role here, and we further assume
a certain integrability condition on the logarithmic derivative of the density of $\mu$.
 This integrability condition is rather mild in our opinion, and it is satisfied
in many cases of interest.

\subsection{First proof of Theorem \ref{thm_539}}
\label{general_case}

As before, we write $e^{-V}$ and $e^{-W}$ for the densities of $\mu$ and $\nu$, respectively.
By log-concavity, the functions $V$ and $W$ are locally-Lipschitz in the open sets $Supp(\mu)$ and $Supp(\nu)$, respectively.
From condition ($\star$) the function $\Phi$ is $C^2$-smooth, and
the push-forward equation (\ref{eq_905}) implies that
\begin{equation}  \det D^2 \Phi(x) = e^{-V(x) + W(\nabla \Phi(x))}
\label{eq_2232} \end{equation}
for any $x \in Supp(\mu)$. In particular, $D^2 \Phi(x)$ is invertible, and hence positive-definite
for all $x \in Supp(\mu)$. Thus $\Phi$ is strictly-convex. The modulus of convexity of $\Phi$ at the point $x$ is defined to be
$$ \omega_{\Phi}(x; \delta) = \inf \left \{  \Phi(y) - \left( \Phi(x) + \nabla \Phi(x) \cdot (y - x) \right) \, ; \, y \in \RR^n, \,  |y-x| = \delta \right \}. $$
Then $\omega_{\Phi}(x; \delta)$ is a positive, continuous function of $x \in Supp(\mu)$ and $\delta > 0$, when we restrict attention
to $x$ and $\delta$ for which $\overline{B(x, \delta)} \subseteq Supp(\mu)$. Here, $B(x, \delta) = \{ y \in \RR^n \, ; \, |y-x| < \delta \}$.
Next, the Legendre transform
$$ \Phi^*(x) = \sup_{y \in \RR^n \atop{\Phi(y) < \infty}} \left[ x \cdot y \, - \, \Phi(y) \right] $$
is also $C^2$-smooth and strictly-convex in $Supp(\nu)$, with $y \mapsto \nabla \Phi^*(y)$
being the inverse map to $x \mapsto \nabla \Phi(x)$. Thus $\nabla \Phi$ is a $C^1$-diffeomorphism of $Supp(\mu)$
and $Supp(\nu)$. The reader is referred to Rockafellar \cite{Rockafellar}
for the basic properties of the Legendre transform.

\medskip We will approximate
$\mu$ and $\nu$ by sequences of probability measures $\mu_N$ and $\nu_N$ with the following properties:
\begin{enumerate}
\item[(i)] The probability measure $\mu_N$ (respectively $\nu_N$) has a density in $\RR^n$ of the form $e^{-V_N}$ (respectively $e^{-W_N}$).
\item[(ii)] The functions $V_N, W_N: \RR^n \rightarrow \RR$ are smooth and for any $x \in \RR^n$,
$$ D^2 V_N(x) \geq \frac{1}{N} \cdot {\rm Id}, \qquad D^2 W_N(x) \leq N \cdot {\rm Id}. $$
\item[(iii)] $V_N \longrightarrow V$ locally uniformly in $Supp(\mu)$, and similarly, $W_N \longrightarrow W$ locally uniformly in $Supp(\nu)$.
\end{enumerate}
It is quite standard to approximate $\mu$ and $\nu$ in this manner. For instance, in
order to obtain $\mu_N$ (or $\nu_N$), we may convolve $\mu$ (or $\nu$) with a Gaussian of a tiny variance, then multiply the resulting
density by a Gaussian of a huge variance, and then normalize to obtain a probability density.
Denote by $\nabla \Phi_N$ the Brenier map between $\mu_N$ and $\nu_N$. We use again Caffarelli's regularity theory,
to conclude that $\Phi_N: \RR^n \rightarrow \RR$ is a smooth, strictly-convex function, with
\begin{equation} \det D^2 \Phi_N(x) = e^{-V_N(x) + W_N(\nabla \Phi_N(x))} \qquad \qquad (x \in \RR^n). \label{eq_1058} \end{equation}
 The following lemma should be known to experts on the Monge-Amp\`ere equation,
yet we could not find it in the literature.

\begin{lemma} There exists an increasing sequence $\{ N_j \}$ such that
$$ D^2 \Phi_{N_j}(x) \stackrel{j \rightarrow \infty} \longrightarrow D^2 \Phi(x)
$$
locally uniformly in $x \in Supp(\mu)$. \label{lem_538}
\end{lemma}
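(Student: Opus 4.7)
The plan is to combine a stability result for Brenier maps with Caffarelli's interior regularity estimates for the Monge--Amp\`ere equation. The argument proceeds in three stages: first extract a locally uniform subsequential limit of the convex potentials $\Phi_N$, then identify that limit with $\Phi$ and promote the convergence to locally uniform convergence of gradients, and finally bootstrap to locally uniform convergence of Hessians via a uniform $C^{2,\alpha}$-estimate.

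First I would normalize by subtracting $\Phi_N(x_0)$ for a fixed base point $x_0 \in Supp(\mu)$, which does not affect $D^2 \Phi_N$. A standard stability argument for optimal transport, exploiting the dual Kantorovich formulation together with the weak convergence of $\mu_N$ to $\mu$ and of $\nu_N$ to $\nu$ and the uniform moment bounds inherited from log-concavity, shows that the $\Phi_N$ are uniformly bounded on compact subsets of $Supp(\mu)$. Convexity upgrades this to a uniform local Lipschitz bound, so by Arzel\`a--Ascoli a subsequence converges locally uniformly on $Supp(\mu)$ to a convex function $\Psi$. Weak stability of pushforwards then gives $(\nabla \Psi)_*\mu = \nu$, so uniqueness of the Brenier map forces $\Psi$ to equal $\Phi$ up to an additive constant, which the normalization pins down. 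Because every subsequential limit coincides with $\Phi$, the full sequence $\Phi_N$ converges to $\Phi$ locally uniformly on $Supp(\mu)$, and convexity combined with the $C^1$-smoothness of $\Phi$ (from $(\star)$) upgrades this to $\nabla \Phi_N \to \nabla \Phi$ locally uniformly on $Supp(\mu)$.

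Next, fix a compact $K \subset Supp(\mu)$. Since $\nabla \Phi$ is a $C^1$-diffeomorphism from $Supp(\mu)$ onto $Supp(\nu)$, the set $\nabla \Phi(K)$ is a compact subset of $Supp(\nu)$; by the previous step, $\nabla \Phi_N(K)$ is contained in a slightly larger compact set $K' \subset Supp(\nu)$ for all $N$ sufficiently large. The locally uniform convergences $V_N \to V$ and $W_N \to W$, together with continuity of $V$ and $W$ on $K$ and $K'$, imply that the right-hand side of \eqref{eq_1058} is, in $N$, uniformly bounded above and below and uniformly H\"older continuous on $K$ (the latter using the already-obtained uniform local Lipschitz bound on $\nabla \Phi_N$). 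Caffarelli's interior $C^{2,\alpha}$-estimate for Monge--Amp\`ere equations then yields a uniform bound on $\|\Phi_N\|_{C^{2,\alpha}(K)}$ (perhaps after mildly shrinking $K$ inside $Supp(\mu)$ so that sections of $\Phi_N$ remain compactly contained, using the uniform $C^1$-control from the previous step). A diagonal Arzel\`a--Ascoli argument along an exhaustion of $Supp(\mu)$ by compacta then supplies a subsequence $\{N_j\}$ along which $D^2 \Phi_{N_j}$ converges locally uniformly on $Supp(\mu)$, with the limit forced to be $D^2 \Phi$ by uniqueness.

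The principal obstacle is the stability step at the beginning: one must rule out that some of the $\Phi_N$ develop extremely steep growth on parts of $Supp(\mu)$ as $N \to \infty$. Morally this is a continuity statement for the Brenier map with respect to its marginals when the target is well-behaved, but making it rigorous requires some care with Kantorovich duality and with the moment control provided by log-concavity. Once the $C^1$-convergence is in hand, the transition to $C^2$-convergence is a fairly routine application of Caffarelli's regularity machinery.
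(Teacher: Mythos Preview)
Your overall architecture matches the paper's: extract a subsequential limit of the potentials, identify it with $\Phi$ via uniqueness of the Brenier map, upgrade to locally uniform gradient convergence using convexity, and then bootstrap to $C^2$ via Monge--Amp\`ere regularity. The stability step you sketch (via Kantorovich duality and moment bounds) is a reasonable alternative to the paper's route, which instead bounds $\sup_N \int |\nabla \Phi_N|^2 e^{-V_N}$ directly and invokes Rellich--Kondrachov; either works.

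The genuine gap is in your passage from $C^1$-convergence to the uniform $C^{2,\alpha}$-estimate. You assert that the right-hand side of \eqref{eq_1058} is ``uniformly H\"older continuous on $K$ (the latter using the already-obtained uniform local Lipschitz bound on $\nabla \Phi_N$)''. But no such bound has been obtained: locally uniform convergence $\nabla \Phi_N \to \nabla \Phi$ gives you uniform $C^0$-control on $\nabla \Phi_N$, not a uniform Lipschitz (or even H\"older) bound on $\nabla \Phi_N$. A uniform Lipschitz bound on $\nabla \Phi_N$ is equivalent to a uniform bound on $D^2 \Phi_N$, which is precisely what you are trying to establish. Without uniform H\"older continuity of $x \mapsto W_N(\nabla \Phi_N(x))$, Caffarelli's interior $C^{2,\alpha}$-estimate cannot be applied uniformly in $N$.

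The paper closes this gap by an intermediate step you have omitted: it tracks the modulus of convexity of $\Phi_N$ and $\Phi_N^*$ (showing these converge locally uniformly to those of $\Phi$ and $\Phi^*$), and then invokes the \emph{constructive} $C^{1,\alpha}$-estimates of Gutierrez--Huang and Forzani--Maldonado to obtain a uniform bound $|\nabla \Phi_N(x) - \nabla \Phi_N(y)| \le C|x-y|^\alpha$ near each point, with $C,\alpha$ independent of $N$. Only after this uniform $C^{1,\alpha}$-control is in hand does the right-hand side become uniformly H\"older, at which point the Trudinger--Wang $C^{2,\alpha}$-estimate and Arzel\`a--Ascoli finish the job. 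Your outline needs this additional layer (or some substitute for it) to be complete.
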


\begin{proof} Fix $x_0 \in Supp(\mu)$. It suffices to find $\{ N_j \}$ such that
$D^2 \Phi_{N_j} \longrightarrow D^2 \Phi$ uniformly in a neighborhood of $x_0$.
A standard convexity argument (e.g., \cite[Section 2]{K_part_I}) based on (iii) and the fact that $\int e^{-V} = \int e^{-W} = 1$
shows that
there exist $A, B > 0$ with \begin{equation}
 \min \left \{ \inf_N V_N(x), \inf_N W_N(x), V(x), W(x) \right \} \geq A |x| - B,
 \qquad \qquad (x \in \RR^n).
\label{eq_1019} \end{equation}
Therefore,
\begin{equation} \sup_N \int_{\RR^n} |\nabla \Phi_N|^2 e^{-V_N(x)} dx = \sup_N \int_{\RR^n} |x|^2 e^{-W_N(x)} dx
\leq \int_{\RR^n} |x|^2 e^{B-A |x|} dx <
\infty.
\label{eq_1033} \end{equation}
Recall that $V_N \rightarrow V$ locally uniformly in $Supp(\mu)$, according to (iii).
From (\ref{eq_1033}) we learn that $\sup_N \| \Phi_N \|_{\dot{H}^1(K)} < \infty$
for any compact $K \subset Supp(\mu)$.
Here, $$ \| u \|_{\dot{H}^1(K)}^2 = \int_K |\nabla u(x)|^2 dx. $$
From the Rellich-Kondrachov compactness theorem (e.g., \cite[Section 4.6]{EG}),
we conclude that there exists a subsequence $\Phi_{N_j}$, numbers $C_j \in \RR$ and a certain function $F: Supp(\mu) \rightarrow \RR$ such that for any compact $K \subset Supp(\mu)$,
the sequence $\Phi_{N_j} + C_j$ converges to $F$ in $L^2(K)$. Passing to another subsequence, which
we conveniently denote again by $\{ \Phi_N \}$, and using \cite[Theorem 10.9]{Rockafellar},
we may assume that $F$ is convex and that the convergence is locally-uniform in $Supp(\mu)$.
Thus, from \cite[Theorem 24.5]{Rockafellar},
\begin{equation}  \nabla \Phi_N(x) \stackrel{N \rightarrow \infty}\longrightarrow \nabla F(x)
\label{eq_1046} \end{equation}
for almost any $x \in Supp(\mu)$. However, $(\nabla \Phi_N)_* \mu_N = \nu_N$. From
(iii), (\ref{eq_1019}) and (\ref{eq_1046}) we conclude that $(\nabla F)_* \mu = \nu$.
From the uniqueness of the Brenier map, we deduce that $\nabla F = \nabla \Phi$ almost everywhere in $Supp(\mu)$.
Since $\Phi$ is $C^2$-smooth, then we may apply  \cite[Theorem 25.7]{Rockafellar},
and upgrade (\ref{eq_1046}) to
\begin{equation}  \nabla \Phi_N(x) \stackrel{N \rightarrow \infty}\longrightarrow \nabla \Phi(x)
\label{eq_130} \end{equation}
locally uniformly in $Supp(\mu)$. The convexity arguments in \cite[Section 25]{Rockafellar}
also show that
$\nabla \Phi_N^* \rightarrow \nabla \Phi^*$ locally uniformly in $Supp(\nu)$.
As for the modulus of convexity, we have
\begin{equation} \omega_{\Phi_N}(x ; \delta) \stackrel{N \rightarrow \infty}{\longrightarrow}
\omega_{\Phi}(x ; \delta), \qquad \text{and respectively,} \qquad \omega_{\Phi^*_N}(y ; \delta) \stackrel{N \rightarrow \infty} \longrightarrow
\omega_{\Phi^*}(y ; \delta) \label{eq_1139} \end{equation}
locally uniformly in the set $\{ (x, \delta)  \in Supp(\mu) \times (0, \infty)  \, ; \, \overline{B(x,\delta)} \subset Supp(\mu) \}$,
and respectively, in the set $\{ (y, \delta)  \in Supp(\nu) \times (0, \infty)  \, ; \, \overline{B(y,\delta)} \subset Supp(\nu) \}$.

\medskip We will now invoke the estimates
of Gutierrez and Huang \cite{GH} and Forzani and Maldonado \cite{FM, FM2}, which are constructive
versions of Caffarelli's $C^{1,\alpha}$-regularity theory.
We are allowed to apply  \cite[Theorem 2.1]{GH}
and \cite[Theorem 15]{FM} locally near $x_0$, thanks to (iii), (\ref{eq_1058}), (\ref{eq_130}) and (\ref{eq_1139}).
From \cite[Theorem 15]{FM} we learn that there exist $\alpha, \delta, C > 0$ such that
for any $x, y \in B(x_0, \delta)$ and $N \geq 1$,
\begin{equation}  |\nabla \Phi_N(x) - \nabla \Phi_N(y)| \leq C |x - y|^{\alpha}.
\label{eq_326} \end{equation}
The function $V$ is locally-Lipschitz. From (iii) and \cite[Theorem 24.5]{Rockafellar},
the sequence $\{ V_N \}$ is uniformly locally-Lipschitz: This means that for any compact subset $K \subset Supp(\mu)$,
the Lipschitz constant of $V_N$ is bounded by some finite number $C_K$, independent of $N$.
Similarly, the sequence $\{ W_N \}$ is also uniformly locally-Lipschitz.
Together with (\ref{eq_130}) and (\ref{eq_326}) we deduce that there exist $\hat{C} > 0$ such that
$u_N(x) = -V_N(x) + W_N(\nabla \Phi_N(x))$ satisfies
$$ |u_N(x) - u_N(y)| \leq \hat{C} |x - y|^{\alpha}
\qquad \qquad (x, y \in B(x_0, \delta), N \geq 1).
$$
Recalling the Monge-Amp\`ere equation (\ref{eq_1058}), we learn that that there exists $\tilde{C} > 0$ such that
$$ \left| \det D^2 \Phi_N(x) - \det D^2 \Phi_N(y) \right| \leq \tilde{C} |x - y|^{\alpha}. \qquad \qquad (x, y \in B(x_0, \delta), N \geq 1).
$$
We are finally in good shape for applying the $C^{2,\alpha}$-estimates from Trudinger and Wang \cite[Theorem 3.2]{TW}.
These estimates yield the existence of $\bar{C} > 0$ such that
for any $x, y \in B(x_0, \delta/2)$ and $N \geq 1$,
\begin{equation}  \| D^2 \Phi_N(x) - D^2 \Phi_N(y) \|_{HS} \leq \bar{C} |x - y|^{\alpha}.
\label{eq_514} \end{equation}
The uniform $C^{2, \alpha}$-estimate in (\ref{eq_514}) allows us to apply the Arzella-Ascoli theorem. All we need is to
denote $K = B(x_0, \delta/2)$ and observe that
$$ \int_{K}  (\Delta \Phi_N) \xi = -\int_{ K} \nabla \Phi_N \cdot \nabla \xi \, \stackrel{N \rightarrow \infty}{\longrightarrow} \,
 -\int_{ K} \nabla \Phi \cdot \nabla \xi =  \int_{K}  (\Delta \Phi)  \xi, $$
 where $\xi$ is any smooth, compactly-supported function in $K$.
Hence the sequence $\{ \int_{K}  \Delta \Phi_N \}_{N \geq 1}$ is bounded, and since $D^2 \Phi_N$ is positive-definite, also
 the sequence $\{ \int_{K}  \| D^2 \Phi_N \|_{HS} \}_{N \geq 1}$ is bounded. From  (\ref{eq_514}) and the Arzella-Ascoli theorem, there exists a
 subsequence, denoted still by $\{ \Phi_N \}$, such that $D^2 \Phi_N \longrightarrow D^2 \Phi$ uniformly on $K = B(x_0, \delta/2)$.
\end{proof}

\begin{remark}{\rm Our proof of Lemma \ref{lem_538} does not make any use of the log-concavity of $\mu$ and $\nu$.
By inspecting the proof above, we see that Lemma \ref{lem_538} holds true as long as $V$ and $W$ are locally H\"older,
and $V_N, W_N$ are uniformly locally H\"older.
}\end{remark}

In order to simplify the notation, we denote the sequence $\{ \Phi_{N_j} \}$ from Lemma \ref{lem_538}
by $\{ \Phi_N \}$. Properties (i), (ii) and (iii) above are still satisfied.

\begin{corollary}  Denote by $\theta_N$ the push-forward of the measure $\mu_N$
under the map $x \mapsto D^2 \Phi_N(x)$. Then for any bounded, continuous function $b: M_n^+(\RR) \rightarrow \RR$,
\begin{equation}  \int_{M_n^+(\RR)} b d \theta_N \, \stackrel{N \rightarrow \infty}{\longrightarrow} \, \int_{M_n^+(\RR)} b d \theta.
\label{eq_202}
\end{equation}
Furthermore, if $b: M_n^+(\RR) \rightarrow \RR$ is bounded and upper semi-continuous, then
\begin{equation}  \limsup_{N \rightarrow \infty} \int_{M_n^+(\RR)} b d \theta_N \, \leq \, \int_{M_n^+(\RR)} b d \theta. \label{eq_205}
\end{equation}
\label{cor_539}
\end{corollary}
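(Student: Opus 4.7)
The plan is to combine the locally uniform convergence $D^2 \Phi_N \longrightarrow D^2 \Phi$ from Lemma \ref{lem_538} with weak convergence $\mu_N \longrightarrow \mu$ on $\RR^n$. The first preparatory step is to establish this weak convergence. The uniform tail bound $e^{-V_N(x)} \leq e^{B - A|x|}$ afforded by (\ref{eq_1019}) yields tightness of $\{ \mu_N \}$, while property (iii) gives $V_N \longrightarrow V$ locally uniformly in $Supp(\mu)$; consequently any weak subsequential limit $\mu^*$ agrees with $\mu$ when tested against any $h \in C_c^{\infty}(Supp(\mu))$, and since $\mu$ is itself a probability measure supported on $Supp(\mu)$, the limit must be $\mu$ itself.

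For (\ref{eq_202}), given a bounded continuous $b$ and $\eps > 0$, I would introduce a continuous cutoff $\chi: \RR^n \rightarrow [0,1]$ compactly supported in $Supp(\mu)$ with $\int (1 - \chi) d\mu < \eps$, and work with
$$ g_N(x) := \chi(x) \cdot b(D^2 \Phi_N(x)), \qquad g(x) := \chi(x) \cdot b(D^2 \Phi(x)), $$
both extended by zero outside $Supp(\mu)$; since $\chi$ vanishes in a neighborhood of $\partial Supp(\mu)$, both $g_N$ and $g$ are bounded continuous on $\RR^n$. The central observation is that $g_N \longrightarrow g$ uniformly on $\RR^n$: Lemma \ref{lem_538} provides uniform convergence $D^2 \Phi_N \longrightarrow D^2 \Phi$ on the compact set $K = \mathrm{supp}(\chi) \subset Supp(\mu)$, and $b$ is uniformly continuous on the compact subset of $M_n^+(\RR)$ into which the $D^2 \Phi_N$ map $K$ for all large $N$. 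Combined with the weak convergence $\mu_N \longrightarrow \mu$ established above, this yields $\int g_N d\mu_N \longrightarrow \int g \, d\mu$. The cutoff errors $\left|\int (1 - \chi) b(D^2 \Phi_N) d\mu_N\right|$ and $\left|\int (1 - \chi) b(D^2 \Phi) d\mu\right|$ are at most $\|b\|_{\infty} \int (1 - \chi) d\mu_N$ and $\|b\|_{\infty} \int (1 - \chi) d\mu$ respectively; the former converges, by weak convergence applied to the bounded continuous function $1 - \chi$, to the latter, which is below $\eps \|b\|_{\infty}$. Letting $\eps \rightarrow 0$ then gives (\ref{eq_202}).

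The second statement (\ref{eq_205}) will follow from (\ref{eq_202}) by the Portmanteau theorem, since $M_n^+(\RR)$ is Polish: weak convergence of the probability measures $\theta_N \longrightarrow \theta$ provided by (\ref{eq_202}) is equivalent to the $\limsup$ bound for every bounded upper semi-continuous test function. Explicitly, one may write $b$ as a pointwise decreasing infimum $b = \inf_k b_k$ of bounded Lipschitz functions via the sup-convolutions $b_k(A) = \sup_{B \in M_n^+(\RR)} \bigl( b(B) - k \cdot dist(A,B) \bigr)$ (truncated at $\|b\|_\infty$), apply (\ref{eq_202}) to each $b_k$, and pass to the limit using $b \leq b_k$ together with monotone convergence.

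The principal technical obstacle is the small, possibly nonzero mass that the smoothed measures $\mu_N$ may place outside $Supp(\mu)$, where $D^2 \Phi$ itself is not defined; the cutoff $\chi$ combined with the uniform exponential tail bound (\ref{eq_1019}) removes this obstruction cleanly.
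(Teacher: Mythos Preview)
Your argument is correct, but it takes a different route from the paper's. The paper proves (\ref{eq_202}) by a direct application of the dominated convergence theorem to the integral $\int_{\RR^n} b(D^2\Phi_N(x)) e^{-V_N(x)}\,dx$: Lemma \ref{lem_538} together with property (iii) gives pointwise convergence of the integrand on $Supp(\mu)$, and (\ref{eq_1019}) supplies the integrable majorant $\|b\|_\infty e^{B-A|x|}$. For (\ref{eq_205}) the paper again works pointwise, using upper semi-continuity of $b$ to get $\limsup_N b(D^2\Phi_N(x))e^{-V_N(x)} \le b(D^2\Phi(x))e^{-V(x)}$ on $Supp(\mu)$, and then applies the reverse Fatou lemma with the same majorant. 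Your approach instead first establishes weak convergence $\mu_N \to \mu$ on $\RR^n$, localizes via a compactly supported cutoff $\chi$ in $Supp(\mu)$, and deduces (\ref{eq_205}) abstractly from (\ref{eq_202}) via Portmanteau. The paper's argument is shorter; yours has the advantage of being explicit about what happens to the mass $\mu_N$ places outside $Supp(\mu)$ (where $D^2\Phi$ is undefined), a point the paper's one-line DCT invocation leaves implicit in the particular choice of approximating sequence. Your derivation of (\ref{eq_205}) from (\ref{eq_202}) via Portmanteau is also cleaner in that it requires no second pass through the pointwise analysis.
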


\begin{proof} In order to prove (\ref{eq_202}), we need to show that
$$ \int_{\RR^n} b \left( D^2 \Phi_N(x) \right) e^{-V_N(x)} dx \, \stackrel{N \rightarrow \infty}{\longrightarrow} \,
\int_{\RR^n} b \left( D^2 \Phi(x) \right) e^{-V(x)} dx. $$
This follows from Lemma \ref{lem_538} and the dominated convergence theorem, since (\ref{eq_1019}) provides an integrable majorant.
Next, assume that $b$ is bounded and upper semi-continuous. Then for any $x \in Supp(\mu)$,
$$ \limsup_{N \rightarrow \infty} b \left( D^2 \Phi_N(x) \right) e^{-V_N(x)} \,  \leq \, b(D^2 \Phi(x)) e^{-V(x)}. $$
Now (\ref{eq_205}) follows from Fatou's lemma, since we have an integrable majorant by (\ref{eq_1019}).
\end{proof}

\begin{proof}[Proof of Theorem \ref{thm_539}]
Assume first that
the locally-Lipschitz function $F$ is compactly supported.
We observe that for any fixed $N$,
assumption (\ref{eq_952}) holds true. Indeed, we may apply
a refinement of
Caffarelli's contraction theorem \cite{Caf3} which appears in \cite{kol1}, and obtain from (ii) that for any $x \in \RR^n$,
$$  D^2 \Phi_N(x) \geq \frac{1}{N^2} \cdot {\rm Id}.  $$
We may therefore apply Proposition \ref{prop_poincare}, and conclude that for any $N \geq 1$,
$$ \int_{M_n^+(\RR)} F^2 d \theta_N  \, -  \, \left( \int_{M_n^+(\RR)} F d \theta_N \right)^2 \, \leq \, 4 \int_{M_n^+(\RR)} |\nabla F|^2 d \theta_N.
$$
Recall that $|\nabla F|^2$ is upper semi-continuous and bounded, while $F$ is continuous and bounded. By taking the limit as $N \rightarrow \infty$ and using Corollary \ref{cor_539}, we obtain that
$$ \int_{M_n^+(\RR)} F^2 d \theta  \, -  \, \left( \int_{M_n^+(\RR)} F d \theta \right)^2 \, \leq \, 4 \int_{M_n^+(\RR)} |\nabla F|^2 d \theta,
$$
and (\ref{eq_606}) is proven in the case where $F$ is a compactly-supported function.

\medskip The next step is to prove  (\ref{eq_606}) under the additional assumption that $F \in L^2(\theta)$. To that end
we pick a smooth function $\theta_{R} : M_n^+(\RR) \rightarrow [0,1]$, such that $\theta_{R}$ equals one on $B({\rm Id}, R)$ and it vanishes outside $B({\rm Id}, 2R)$,
with $|\nabla \theta_R| \leq 2/R$. Set $F_R = \theta_R F$. We have just proven that (\ref{eq_606}) holds true when $F$ is replaced by $F_R$.
Clearly, $F_R \longrightarrow F$ in $L^2(\theta)$ as $R \longrightarrow \infty$. All that remains is to show that
\begin{equation} \limsup_{R \rightarrow \infty} \int_{M_n^+(\RR)} |\nabla F_R|^2 d \theta \leq \int_{M_n^+(\RR)} |\nabla F|^2 d \theta. \label{eq_245} \end{equation}
The functions $\theta_R$ and $F$ are continuous, and therefore we may use the Leibnitz rule $$ |\nabla F_R | \leq |F| |\nabla \theta_R| + \theta_R |\nabla F| \leq |\nabla F| + 2 |F| / R, $$
where we interpret $|\nabla F|$ and $|\nabla F_R|$ in the sense of definition (\ref{eq_232}). Since $F, |\nabla F| \in L^2(\theta)$, then (\ref{eq_245}) follows
in the case where $F \in L^2(\theta)$.

\medskip Finally, in order to eliminate the assumption that $F \in L^2(\theta)$, we replace
$F$ by $F_R = \max \{ -R, \min \{ F, R \} \}$, apply the inequality for $F_R$, and let
$R$ tend to infinity. For all but countably many values of $R$, the level set
 $\{ A \in M_n^+(\RR) \, ; \, F(A) = R \}$ has zero $\theta$-measure.
 Consequently, we have the inequality
 $\int |\nabla F_R|^2 d \theta \leq
\int |\nabla F|^2 d \theta$ for all but countably many values of $R$, and (\ref{eq_606}) follows.
\end{proof}

\subsection{Second proof: Log-concave target measure}
\label{lc_target}

In our second proof we will exploit the fact that $\nu$ is log-concave, but we will not require the log-concavity of $\mu$.
Throughout this subsection we make the following additional assumption:

\begin{enumerate}
\item[] {\bf Assumption (A):} For some $p > n$,
$$
\int_{\RR^n} |\nabla V|^{p} e^{-V} \ dx <\infty,
$$
where the derivatives $V_{i}$ are understood in the logarithmic derivative sense, i.e.
$$
\int_{\RR^n} \xi V_{i}  d \mu = - \int_{\RR^n} \xi_{i} d \mu, \quad \xi \in C^{\infty}_{c}(\mathbb{R}^n), i=1,\ldots,n.
$$
\end{enumerate}

By the Morrey embedding theorem (see, e.g., \cite[Section 4.5]{EG}), the function $V$
is locally  H{\"o}lder. We will approximate
$\mu$ and $\nu$ by sequences of probability measures $\mu_N$ and $\nu_N$ having properties (i), (ii) and
(iii) from Subsection \ref{general_case}. We also require a fourth property:
\begin{enumerate}
\item[(iv)] There exists $p>n$ such that
$$
\sup_{N} \int_{\RR^n} |\nabla V_N|^{p} e^{-V_N} \ dx <\infty.
$$
\end{enumerate}
The approach outlined in Subsection \ref{general_case}, to convolve with a tiny Gaussian and
then multiply by the density of a huge Gaussian, yields also property (iv).
Recall that the Brenier map $\nabla \Phi_N$ between $\mu_N$ and $\nu_N$ is  smooth and that it satisfies (\ref{eq_1058}).
The central ingredient of this subsection is the following a priori estimate:

\begin{proposition}
\label{glob-ae}
Assume that functions $V, W$ and $\Phi$ are smooth on the entire $\RR^n$ and that $\nu$ is a log-concave measure.
Then for every $q \ge 2$, $0 < \tau < 1, i=1,\ldots,n$ there exists $C(q,\tau)>0$
\begin{equation}
\label{Phi-vx}
\int_{\RR^n} \Phi_{ii}^{q} d \mu \le C(q,\tau) \Bigl( \int_{\RR^n} |V_i|^{\frac{2q}{2-\tau}} \ d \mu +
 \int_{\RR^n} |x_i|^{\frac{2q}{\tau}} \ d \nu \Bigr).
\end{equation}
\end{proposition}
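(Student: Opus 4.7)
The strategy is to reduce the bound to a H\"older estimate via a single integration by parts in $x_i$, after controlling the resulting third-derivative remainder through the differentiated Monge--Amp\`ere equation and the log-concavity of $\nu$. Starting from $\Phi_{ii}=\partial_i\Phi_i$ and integrating once (boundary terms vanishing by the decay of $e^{-V}$), one obtains the identity
\begin{equation*}
\int \Phi_{ii}^q\,d\mu \;=\; \int V_i\,\Phi_i\,\Phi_{ii}^{q-1}\,d\mu \;-\; (q-1)\int \Phi_i\,\Phi_{ii}^{q-2}\,\Phi_{iii}\,d\mu.
\end{equation*}
The first integral on the right already has the structure required for the conclusion: H\"older's inequality with the three exponents $a=\tfrac{2q}{2-\tau}$, $b=\tfrac{2q}{\tau}$, $c=\tfrac{q}{q-1}$ (satisfying $\tfrac{1}{a}+\tfrac{1}{b}+\tfrac{1}{c}=1$) together with the push-forward $(\nabla\Phi)_*\mu=\nu$, which converts $\int|\Phi_i|^b\,d\mu$ into $\int|y_i|^b\,d\nu$, gives
\begin{equation*}
\int V_i\Phi_i\Phi_{ii}^{q-1}\,d\mu \;\le\; \Bigl(\!\int|V_i|^{a}d\mu\!\Bigr)^{1/a}\!\Bigl(\!\int|y_i|^{b}d\nu\!\Bigr)^{1/b}\!\Bigl(\!\int\Phi_{ii}^{q}d\mu\!\Bigr)^{(q-1)/q}.
\end{equation*}
Absorbing the last factor into the left-hand side (so that $X\le CKX^{(q-1)/q}$ yields $X\le C^qK^q$) and then applying Young's inequality $A^{(2-\tau)/2}B^{\tau/2}\le A+B$ splits the remaining product of moments into the sum appearing in the conclusion.

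The obstruction is the $\Phi_{iii}$-integral: a second integration by parts purely in $x_i$ merely re-invokes the transport equation~(\ref{eq_2207}) and produces a tautology. The way through is the differentiated Monge--Amp\`ere equation. Differentiating~(\ref{eq_2207}) in $x_i$ (essentially the Bochner calculation of Section~\ref{sec_gamma_2}) yields
\begin{equation*}
L\Phi_{ii}\;=\;-V_{ii}\;+\;\Phi^{k\ell}_i\Phi_{ik\ell}\;+\;\Phi_{ik}\Phi_{i\ell}\,W_{k\ell}(\nabla\Phi),
\end{equation*}
in which the last two terms are non-negative: $\Phi^{k\ell}_i\Phi_{ik\ell}=\|(D^2\Phi)^{-1/2}(\partial_iD^2\Phi)(D^2\Phi)^{-1/2}\|_{HS}^2$, and $\Phi_{ik}\Phi_{i\ell}W_{k\ell}(\nabla\Phi)=\bigl[(D^2\Phi)\,D^2W(\nabla\Phi)\,(D^2\Phi)\bigr]_{ii}\ge 0$ by convexity of $W$, i.e.\ by the log-concavity of $\nu$. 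Testing this pointwise identity against a weight built from $\Phi_i$ and a power of $\Phi_{ii}$, integrating via $\int(Lu)v\,d\mu=-\Gamma(u,v)$, and discarding the two non-negative curvature terms yields an inequality that controls $\int\Phi_i\Phi_{ii}^{q-2}\Phi_{iii}\,d\mu$ by integrals of the form $\int V_i^2\Phi_{ii}^{q-1}\,d\mu$ and $\int V_i\Phi_i\Phi_{ii}^{q-1}\,d\mu$, both of which can then be handled by the same H\"older/Young scheme as in the first paragraph.

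The main obstacle is thus this second step: pure integration by parts cannot succeed, because the Monge--Amp\`ere equation is precisely the identity that IBP reproduces; one must invoke the log-concavity of $\nu$ as a genuine inequality to break the tautology. The bookkeeping to be controlled is that every $W$-dependent quantity produced during the argument either has the right sign to be discarded or can be pushed forward under $\nabla\Phi$ to a moment of $y_i$ under $\nu$ of the form allowed by the right-hand side of the proposition.
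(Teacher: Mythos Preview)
Your overall architecture matches the paper's: the same integration-by-parts identity
\[
\int \Phi_{ii}^q\,d\mu = \int V_i\Phi_i\Phi_{ii}^{q-1}\,d\mu - (q-1)\int \Phi_i\Phi_{ii}^{q-2}\Phi_{iii}\,d\mu,
\]
the same differentiated Monge--Amp\`ere equation, and the same use of log-concavity of $\nu$ to throw away the $D^2W$ term. The H\"older/Young endgame is also fine.

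The gap is in the step where you ``test the pointwise identity against a weight built from $\Phi_i$ and a power of $\Phi_{ii}$'' and ``discard the two non-negative curvature terms.'' If the weight is, say, $v=\Phi_i^2\Phi_{ii}^{q-2}$ (any choice that produces the term $\int\Phi_i\Phi_{ii}^{q-2}\Phi_{iii}\,d\mu$ will behave the same way), then $\int(L\Phi_{ii})v\,d\mu=-\Gamma(\Phi_{ii},v)$ together with $L\Phi_{ii}=-V_{ii}+[\text{curvature}\ge 0]$ gives
\[
2\int \Phi_i\Phi_{ii}^{q-2}\Phi_{iii}\,d\mu + (q-2)\int\Phi_i^2\Phi_{ii}^{q-3}\langle(D^2\Phi)^{-1}\nabla\Phi_{ii},\nabla\Phi_{ii}\rangle\,d\mu \;\le\; \int V_{ii}\,\Phi_i^2\Phi_{ii}^{q-2}\,d\mu.
\]
This is an \emph{upper} bound on $\int\Phi_i\Phi_{ii}^{q-2}\Phi_{iii}\,d\mu$, whereas the main identity requires an upper bound on $-(q-1)\int\Phi_i\Phi_{ii}^{q-2}\Phi_{iii}\,d\mu$, i.e.\ a \emph{lower} bound. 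The curvature terms (both the $\Phi^{k\ell}_i\Phi_{ik\ell}$ term and the $D^2W$ term) sit on the wrong side and cannot be discarded. So the mechanism you describe does not close.

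The paper avoids this by testing $L\Phi_{ii}$ against $\Phi_{ii}^p$ alone (no $\Phi_i$ factor). Then discarding the curvature terms goes the right way and yields, after one more integration by parts on $\int V_{ii}\Phi_{ii}^p\,d\mu$ and a Cauchy--Schwarz, the energy estimate
\[
\int V_i^2\,\Phi_{ii}^p\,d\mu \;\ge\; \int \Phi_{ii}^{p-1}\langle(D^2\Phi)^{-1}\nabla\Phi_{ii},\nabla\Phi_{ii}\rangle\,d\mu.
\]
The bridge back to $\Phi_{iii}$ is then the \emph{pointwise} Cauchy--Schwarz inequality $|\Phi_{iii}|\le \sqrt{\Phi_{ii}}\sqrt{\langle(D^2\Phi)^{-1}\nabla\Phi_{ii},\nabla\Phi_{ii}\rangle}$, which bounds $|\int\Phi_i\Phi_{ii}^{q-2}\Phi_{iii}\,d\mu|$ in absolute value by a product of a $\Phi_i^2\Phi_{ii}^{q-\tau}$ integral and the energy integral with $p=q-2+\tau$. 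This pointwise step is the missing ingredient in your outline; once you have it, the rest of your H\"older/Young scheme finishes the proof exactly as you describe.
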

\begin{proof}
Assume in addition that $D^2 W \ge \frac{1}{C} \cdot \rm{Id}$,  $D^2 V \le C \cdot \rm{Id}$. In this case $D^2 \Phi \le C^2 \cdot {\rm Id}$.
Recall formula (\ref{eq_445}),
$$
L(\Phi_{ii}) - \Phi^{j k}_i \Phi_{ijk} - \sum_{j, k=1}^n    \Phi_{j i} \Phi_{i k} W_{j k} \circ \nabla \Phi = -V_{ii},
$$
which is obtained by differentiating the change of variables formula (\ref{eq_2154}) along $x_i$.
Let us  multiply this formula by $\Phi^p_{ii}, p \ge 0$ and make a formal integration by parts  with respect to  $\mu$.  Using the convexity of $W$ we get
\begin{equation}
\label{28.01}
\int V_{ii} \Phi^{p}_{ii} \ d \mu
\ge   p \int \Phi^{p-1}_{ii} \langle (D^2 \Phi)^{-1}\nabla \Phi_{ii}, \nabla \Phi_{ii} \rangle d \mu
+ \int  \Phi^p_{ii} \Phi^{jk}_i \Phi_{ijk} d \mu.
\end{equation}
Let us justify this formula. To this end we fix a compactly supported function $\eta \ge 0$ and integrate with respect to $\eta \cdot \mu$.
$$
\int V_{ii} \Phi^{p}_{ii} \eta \ d \mu
\ge \int \langle (D^2 \Phi)^{-1} \nabla \eta, \nabla \Phi_{ii} \rangle \Phi^{p}_{ii} d \mu +   p \int \Phi^{p-1}_{ii} \langle (D^2 \Phi)^{-1}\nabla \Phi_{ii}, \nabla \Phi_{ii} \rangle  \eta  d \mu
+ \int  \Phi^p_{ii} \Phi^{jk}_i \Phi_{ijk}  \eta  d \mu.
$$
Applying the Cauchy inequality we get
$$
-\int \langle (D^2 \Phi)^{-1} \nabla \eta, \nabla \Phi_{ii} \rangle \Phi^{p}_{ii} d \mu
\le
 \frac{4}{\varepsilon}  \int \frac{\langle (D^2 \Phi)^{-1} \nabla \eta, \nabla \eta \rangle}{\eta} \Phi^{p+1}_{ii} d \mu + \varepsilon\int  \langle (D^2 \Phi)^{-1} \nabla \Phi_{ii}, \nabla \Phi_{ii} \rangle \Phi^{p-1}_{ii}  \eta  d \mu.
$$
Finally,
\begin{align*}
\int V_{ii} \Phi^{p}_{ii} \eta \ d \mu  & +  \frac{4}{\varepsilon}  \int \frac{\langle (D^2 \Phi)^{-1} \nabla \eta, \nabla \eta \rangle}{ \eta } \Phi^{p+1}_{ii} d \mu
\\& \ge   (p-\varepsilon) \int \Phi^{p-1}_{ii} \langle (D^2 \Phi)^{-1}\nabla \Phi_{ii}, \nabla \Phi_{ii} \rangle \eta d \mu
 + \int  \Phi^p_{ii} \Phi^{jk}_i \Phi_{ijk} \eta d \mu.
\end{align*}
 Assume that $\eta$ has the form
$\eta = \xi(\nabla \Phi)$, where $\xi$ is compactly supported. We get
$$\int V_{ii} \Phi^{p}_{ii} \eta \ d \mu  +   \frac{4 C^{p+2}}{\varepsilon}  \int \frac{|\nabla \xi|^2}{\xi} \ d \nu
\ge   (p-\varepsilon) \int \Phi^{p-1}_{ii} \langle (D^2 \Phi)^{-1}\nabla \Phi_{ii}, \nabla \Phi_{ii} \rangle \eta d \mu
+ \int  \Phi^p_{ii} \Phi^{jk}_i \Phi_{ijk} \eta d \mu.
$$
It remains to construct a sequence of functions $1 \ge \xi_N \ge 0$ satisfying $\lim_N \xi_N(x)=1$ for $\nu$-a.e. $x$ and $\lim_N \int |\nabla \xi_N|^2 / \xi_N \ d\nu=0$. Then applying the Fatou lemma we justify
(\ref{28.01}).

It is helpful to have in mind that $ \Phi^{jk}_i \Phi_{ijk} = \mbox{Tr} \Bigl[ ( D^2 \Phi)^{-\frac{1}{2}} D^2 \Phi_{i} ( D^2 \Phi)^{-\frac{1}{2}} \Bigr]^2 \geq 0$.
From (\ref{28.01}),
$$ \int V_{ii} \Phi^{p}_{ii} \ d \mu
\ge  p \int \Phi^{p-1}_{ii} \langle (D^2 \Phi)^{-1}\nabla \Phi_{ii}, \nabla \Phi_{ii} \rangle d \mu.
$$
Let us integrate by parts the left-hand side
$
\int V_{ii} \Phi^{p}_{ii} \ d \mu = \int V^2_{i} \Phi^{p}_{ii} \ d \mu
- p \int V_i \Phi^{p-1}_{ii} \Phi_{iii} \ d \mu.
$
The justification of this integration by parts is much easier, since $D^2 \Phi$ and $D^2 V$ are bounded.
Applying   $$ 2 |\Phi_{iii} V_i| \le 2 |V_i| \sqrt{\Phi_{ii}  \cdot \langle (D^2 \Phi)^{-1}\nabla \Phi_{ii}, \nabla \Phi_{ii} \rangle}
\le V^2_i \Phi_{ii} + \langle (D^2 \Phi)^{-1}\nabla \Phi_{ii}, \nabla \Phi_{ii} \rangle $$ one obtains

\begin{equation}
\label{09.01}
 \int V^2_{i} \Phi^{p}_{ii} \ d \mu  \ge  \int \Phi^{p-1}_{ii} \langle (D^2 \Phi)^{-1}\nabla \Phi_{ii}, \nabla \Phi_{ii} \rangle d \mu.
\end{equation}

Let us show that the right-hand side controls powers of the second derivative $\Phi_{ii}$.
Indeed, for every $q \ge 2$ and $\varepsilon >0,  0 \le \tau \le 1$ the following estimate holds
\begin{align*}
\int \Phi^q_{ii} \ d \mu & =  - (q-1) \int \Phi_{i}\Phi_{iii}  \Phi^{q-2}_{ii} \ d \mu + \int \Phi_i V_i \Phi^{q-1}_{ii} \ d \mu
\\&
 \le \varepsilon \int \Phi^2_i \Phi^{q-\tau}_{ii} \ d \mu + \frac{(q-1)^2}{4 \varepsilon} \int \Phi^{q-3 + \tau}_{ii} \langle (D^2 \Phi)^{-1}\nabla \Phi_{ii}, \nabla \Phi_{ii} \rangle d \mu
\\&
+ \frac{q-1}{q} \int  \Phi^{q}_{ii} \ d \mu + \frac{1}{q} \int |\Phi_i V_i|^q  \ d \mu.
\end{align*}
Finally,
\begin{align*}
 \int \Phi^q_{ii} \ d \mu
& \le
\int |\Phi_i V_i|^q  \ d \mu
+ q \varepsilon \int \Phi^2_i \Phi^{q-\tau}_{ii} \ d \mu +  \frac{q(q-1)^2}{4 \varepsilon} \int \Phi^{q-3 + \tau}_{ii} \langle (D^2 \Phi)^{-1}\nabla \Phi_{ii}, \nabla \Phi_{ii} \rangle d \mu
\
\\& \le
\int |\Phi_i V_i|^q  \ d \mu
+ q \varepsilon \int \Phi^2_i \Phi^{q-\tau}_{ii} \ d \mu +  \frac{q(q-1)^2}{4 \varepsilon} \int \Phi^{q-2 + \tau}_{ii}V^2_i d \mu.
\end{align*}
Applying H{\"o}lder inequalities $$\Phi^2_i \Phi^{q-\tau}_{ii} \le \frac{q-\tau}{q} \Phi^{q}_{ii} + \frac{\tau}{q} |\Phi_i|^{\frac{2q}{\tau}},$$
$$\Phi^{q-2 + \tau}_{ii}V^2_i \le \varepsilon \Phi_{ii}^q + C(\varepsilon,q, \tau) |V_i|^{\frac{2q}{2-\tau}},$$
$$ |\Phi_i V_i|^q \le \frac{2-\tau}{2} |V_i|^{\frac{2q}{2-\tau}} + \frac{\tau}{2} |\Phi_i|^{\frac{2q}{\tau}},$$
choosing sufficiently small $\varepsilon$, and applying the change of variables formula $\int |\Phi_i|^q d \mu = \int |x_i|^q d \nu$
we easily get the claim.

Finally, let us get rid of the assumption $D^2 W \ge \frac{1}{C} \cdot \rm{Id}$,  $D^2 V \le C \cdot \rm{Id}$.
To this end we approximate $\mu$ and $\nu$ by measures with smooth potentials satisfying
$D^2 W_N \ge \frac{1}{C_N} \cdot \rm{Id}$, $D^2 V_N \le C_N \cdot \rm{Id}$
satisfying $\lim_N \int |(V_N)_i|^{2q} \ d \mu_N =  \int |V_i|^{2q} \ d \mu$ and
$\lim_N \int |x_i|^{2q} \ d \nu_N = \int |x_i|^{2q} \ d \nu$.
It remains to show that the weak $L^q(\mu)$-limit of $(\Phi_N)_{ii}$ coincides with $\Phi_{ii}$.
The latter can be easily shown with the help of integration-by-parts and identifications of the poinwise limit $\lim_N \nabla \Phi_N$ with $\nabla \Phi$
(see the proof of Lemma \ref{lem_538}).
\end{proof}

\begin{remark}{\rm  The conclusion of Proposition \ref{glob-ae}
holds without any additional smoothness assumptions.
This can be verified by smooth approximations (see again  \cite{kol0}  for details).
Finally we get that (\ref{Phi-vx}) holds for every log-concave  measure $\nu$ and measure $\mu$ satisfying
$\int |V_i|^{\frac{2q}{2-\tau}} \ d \mu< \infty$, where $V_i$ is the logarithmic derivative of $\mu$ along $x_i$.}
\end{remark}

\begin{proof}[Second proof of Lemma \ref{lem_538}:] Let us show how Proposition \ref{glob-ae} implies
(\ref{eq_326}) above, without appealing to the works by Forzani and Maldonado \cite{FM, FM2} and
Gutierrez and Huang \cite{GH} related to Caffarelli's $C^{1,\alpha}$-regularity theory.
We use that  $\sup_N \int |\nabla V_N|^{p} e^{-V_N} \ dx < \infty$, $p>n$.  Since $\nu$ is log-concave, all the moments of $\nu$ are finite.
Thus Proposition \ref{glob-ae} implies $$\sup_N \int \|D^2 \Phi_N\|_{HS}^{p'} e^{-V_N} \ dx < \infty$$ for any $n < p'<p$.
Applying that $V_N$ are uniformly locally bounded from below, we get
 that $ \sup_N \int_{B_R} \|D^2 \Phi_N\|^{p'}_{HS} \ dx < \infty $ for every $R$. Then the result follows from the
Morrey embedding theorem.
\end{proof}

\section{Corollaries to Theorem \ref{thm_539}}
\label{sec6}

\begin{proof}[Proof of Theorem \ref{thm2}]  For $A \in M_n^+(\RR)$  define
$$ F(A) = f \left(\log \lambda_1(A), \ldots, \log \lambda_n(A) \right) $$
where $0 < \lambda_1(A) \leq \ldots \leq \lambda_n(A)$ are the eigenvalues of $A$.
According to Lemma \ref{lem_2200}, for any $A \in M_n^+(\RR)$,
\begin{equation}  |\nabla F|(A) \leq |\nabla f| \left(\log \lambda_1(A), \ldots, \log \lambda_n(A) \right). \label{eq_1013}
\end{equation}
Since $f$ is locally-Lipschitz and the eigenvalues vary continuously with the matrix $A$,
then (\ref{eq_1013}) implies that also $F$ is locally-Lipschitz.
Denote by $\theta$ the push-forward of the probability measure $\mu$
under the map $x \mapsto D^2 \Phi(x)$. Since $\EE \left| f(\Lambda(X)) \right| < \infty$
then $F \in L^1(\theta)$. Since $\EE |\nabla f|^2(\Lambda(X)) < \infty$,
then $\int |\nabla F|^2 d \theta < \infty$. We may apply Theorem \ref{thm_539} and conclude that
$$  \int_{M_n^+(\RR)} F^2 d \theta - \left( \int_{M_n^+(\RR)} F d \theta \right)^2
\leq 4 \int_{M_n^+(\RR)} |\nabla F|^2 d \theta.  $$
The left-hand side equals $Var \left[ f(\Lambda(X)) \right]$. Glancing at (\ref{eq_1013}), we thus obtain
$$ Var \left[ f(\Lambda(X)) \right] \leq 4 \EE |\nabla f|^2(\Lambda(X)), $$
and the proof is complete.
\end{proof}

\begin{proof}[Proof of Theorem \ref{thm1}] Plug in $f(x) = x_i$ in Theorem \ref{thm2}. Then $f$ is a $1$-Lipschitz
function, by Remark \ref{rem_1115}  we have $\EE \left| f(\Lambda(X)) \right| < \infty$. Thus the application of
Theorem \ref{thm2} is legitimate, and Theorem \ref{thm1} follows.
\end{proof}

\begin{proof}[Proof of Theorem \ref{thm3}] The argument is almost identical to the proof of Theorem \ref{thm1}, with Lemma \ref{lem_2201}
replacing the role of Lemma \ref{lem_2200}.
\end{proof}

Let us end this paper with a few remarks concerning future research.
If we make further assumptions regarding the log-concave measures in question,
it is possible to prove concentration inequalities for the eigenvalues
of $D^2 \Phi$ themselves, and not only for their logarithms. The analysis
of the weighted Riemannian manifold $M_{\mu, \nu}$ leads to such concentration
inequalities. Additionally, there is a soft argument which shows that
when $\nabla \Phi$ is the Brenier map between the uniform measure on $K$
and the uniform measure on $T$,
$$ \int_K \Delta \Phi \leq n V(K,\ldots,K, T), $$
where $V$ stands for mixed volume. The details will be discussed  elsewhere.
Another possible research direction is to investigate whether phenomena similar to Theorem
\ref{thm1} occur also in a non-linear setting, when transporting measures
with convexity properties supported on Riemannian manifolds.

{
}


\end{document}